\newtheorem{thm}{Theorem}
\newtheorem{cor}{Corollary}
\newtheorem{prop}{Proposition}
\newtheorem{rem}{Remark}
\newcommand{\Rset}{\mathbb{R}}
\newcommand{\Xset}{\mathbb{X}}
\begin{document}

\title{Distributionally robust possibilistic optimization problems} 

\author[1]{Romain Guillaume}
\author[2]{Adam Kasperski\footnote{Corresponding author}}
\author[3]{Pawe{\l} Zieli\'nski}

  \affil[1]{Universit{\'e} de Toulouse-IRIT Toulouse, France, \texttt{romain.guillaume@irit.fr}}
\affil[2]{
Wroc{\l}aw  University of Science and Technology, Wroc{\l}aw, Poland\\
            \texttt{adam.kasperski@pwr.edu.pl}}
\affil[3]{
Wroc{\l}aw  University of Science and Technology, Wroc{\l}aw, Poland\\
            \texttt{pawel.zielinski@pwr.edu.pl}}

\date{}

\maketitle

 \begin{abstract}
 In this paper a class of optimization problems with uncertain linear constraints is discussed. It is assumed that the constraint coefficients are random vectors whose probability distributions are only partially known. Possibility theory is used to model the imprecise probabilities. 
In one of  the interpretations,
 a possibility distribution (a membership function of a fuzzy set) in the set of coefficient realizations  induces a necessity measure, which in turn defines a family of probability distributions  in this set. The distributionally robust approach is then used to transform the imprecise constraints into deterministic counterparts. Namely, the uncertain left-had side of each constraint is replaced with the expected value with respect to the worst probability distribution that can occur. It is shown how to represent the resulting problem by using linear or second order cone constraints. This leads to  problems which are computationally
  tractable for a wide class of optimization models, in particular for linear programming.
 \end{abstract}

\noindent \textbf{Keywords}: 
robust optimization; possibility theory; imprecise probabilities; fuzzy intervals

\section{Introduction}

In this paper we consider the following uncertain optimization problem:
\begin{equation}
\label{pf}
	\begin{array}{llll}
		\min & \pmb{c}^T\pmb{x} \\
			& \tilde{\pmb{a}}_i^T\pmb{x}\leq b_i & i\in [m]\\
			& \pmb{x}\in \mathbb{X}.
	\end{array}
\end{equation}
In formulation~(\ref{pf}), $\pmb{c}\in \mathbb{R}^n$ is a vector of objective function coefficients,  $\pmb{b}\in \mathbb{R}^m$ is a vector of constraint right-hand sides, and $\tilde{\pmb{a}}_i$ is a vector of  uncertain coefficients of the $i$th constraint. In the following we will use the notation $[m]=\{1,\dots,m\}$.
We assume that  $\tilde{\pmb{a}}_i$ is a random vector in $\mathbb{R}^n$ whose realization is unknown when a solution to~(\ref{pf}) is determined. A particular realization $\pmb{a}_i\in \mathbb{R}^n$ of $\tilde{\pmb{a}}_i$ is called a \emph{scenario}. After replacing $\tilde{\pmb{a}}_i$ with scenario $\pmb{a}_i$, for each $i\in [m]$, we get a deterministic counterpart of~(\ref{pf}). The true probability distribution for $\tilde{\pmb{a}}_i$ is only partially known. In particular, the set of possible scenarios $\mathcal{U}_i\subseteq \mathbb{R}^n$ is provided, which is the support of $\tilde{\pmb{a}}_i$ or its reasonable approximation.
Finally, $\mathbb{X}$ is a nonempty and bounded subset of $\mathbb{R}^n$ which restricts the domain of decision variables $\pmb{x}$. If  $\mathbb{X}$ is a polyhedron, for example, $\mathbb{X}=\{\pmb{x}\in \mathbb{R}^n: \pmb{L}\leq \pmb{x}\leq \pmb{U}\}$, for some $\pmb{L}, \pmb{U}\in \mathbb{R}^n$, then we get  uncertain linear programming problem. If, additionally, $\mathbb{X}\subseteq \{0,1\}^n$, then we get uncertain combinatorial optimization problem.

The method of handling the uncertain constraints and solving~(\ref{pf}) depends on the information available about $\tilde{\pmb{a}}_i$. If $\tilde{\pmb{a}}_i$ is a random vector with a known probability distribution, then the $i$th constraint can be replaced with a stochastic \emph{chance constraint} of the form 
$${\rm P}(\tilde{\pmb{a}}_i^T\pmb{x}\leq b_i)\geq 1-\epsilon_i,$$
 where $\epsilon_i\in [0,1)$ is a given risk level~\cite{CC59, KM05}.
If the uncertainty set $\mathcal{U}_i$ is the only information provided with $\tilde{\pmb{a}}_i$, then the imprecise constraint $\tilde{\pmb{a}}_i^T\pmb{x}\leq b_i$ can be replaced with the following
\emph{strict robust constraint}~\cite{BN09}:
\begin{equation}
\label{rc}
\max_{\pmb{a}_i\in \mathcal{U}_i} \pmb{a}_i^T\pmb{x}\leq b_i.
\end{equation}

 Choosing appropriate uncertainty set $\mathcal{U}_i$ is crucial in~(\ref{rc}). For the \emph{discrete uncertainty}, $\mathcal{U}_i$ consists of $K$ explicitly listed scenarios~\cite{KY97}. These scenarios correspond to some events which influence the value of $\tilde{\pmb{a}}_i$ or can be the result of sampling the random vector $\tilde{\pmb{a}}_i$. For the \emph{interval uncertainty}~\cite{KY97}, one provides an interval for each component $\tilde{a}_{ij}$ of $\tilde{\pmb{a}}_i=(\tilde{a}_{i1},\dots,\tilde{a}_{in})$ and $\mathcal{U}_i$ is the Cartesian product  of these intervals (a hyperrectangle in $\mathbb{R}^n$). In order to cut off the extreme values of this hyperrectangle, whose probability of occurrence can be negligible, one can add a \emph{budget} to $\mathcal{U}_i$. This budget typically limits deviations of  scenarios from some nominal scenario $\hat{\pmb{a}}_i\in \mathcal{U}_i$~\cite{BS04, NO13}. In another approach $\mathcal{U}_i$ is specified as an ellipsoid centered at some nominal scenario $\hat{\pmb{a}}_i$~\cite{BN09}.  Using the ellipsoidal uncertainty one can model correlations between the components of $\tilde{\pmb{a}}_i$.  One can also construct $\mathcal{U}_i$ from available data~\cite{BGK17}, which leads to various \emph{data-deriven robust models}. 
 
Another approach to handle the uncertainty consists in modeling the uncertain coefficients in~(\ref{pf}) by using \emph{fuzzy sets}. There are a lot of concepts of solving  fuzzy optimization problems 
(see, e.g.,~\cite{IRTV03,RV02,LH92,R06, ST90, QSC12, PZT11}) -- for 
 a comprehensive description of them 
 we refer the reader to~\cite{LK10}. In one of the most 
common class of models, $\tilde{\pmb{a}}_i$ is a vector of fuzzy intervals whose membership functions are interpreted as \emph{possibility distributions}
 for the components of~$\tilde{\pmb{a}}_i$. A description of \emph{possibility theory} that 
 offers a general framework of dealing with imprecise or incomplete knowledge can be found, for example, in~\cite{DP88}.
 It uses two dual \emph{possibility} and \emph{necessity measures} to handle the uncertainty. In the possibilistic setting one can replace the uncertain constraints in~(\ref{pf}) with \emph{fuzzy chance constraints} of the form~\cite{IR00, LIU01,KZ21}:
$$
\label{necc}
{\rm N}(\tilde{\pmb{a}}^T_i\pmb{x}\leq b_i)\geq 1-\epsilon_i,
$$
where ${\rm N}$ is a \emph{necessity measure} induced by a possibility distribution for $\tilde{\pmb{a}}_i$.

If a partial information about the probability distribution of $\tilde{\pmb{a}}_i$ is available, then the so-called \emph{distributionally robust approach} can be used (see, e.g.,~\cite{GS10, WKS14, DY10}). 
In this approach it is assumed
 that  the true probability distribution ${\rm P}$ for $\tilde{\pmb{a}}_i$ belongs to the so-called \emph{ambiguity set}  $\mathcal{P}_i$ of probability distributions. For example, one can consider all probability distributions  given some bounds on its mean and covariance matrix~\cite{DY10}. Alternatively, the ambiguity set can be specified by providing a family of confidence sets (this approach will be adopted in this paper)~\cite{WKS14}. Using the distributionally robust approach,  one can consider the following counterpart of the imprecise constraint $\tilde{\pmb{a}}_i^T\pmb{x}\leq b_i$:
\begin{equation}
\label{drc}
\max_{{\rm P}\in \mathcal{P}_i}  \mathrm{\mathbf{E}}_{\rm P}[\tilde{\pmb{a}}_i^T\pmb{x}]\leq b_i,
\end{equation}
where the left hand side is the expected value of the random variable $\tilde{\pmb{a}}_i^T\pmb{x}$ under a worst probability distribution  ${\rm P}\in\mathcal{P}_i$. Observe that~(\ref{drc}) reduces to~(\ref{rc}) if $\mathcal{P}_i$ contains all probability distributions with the support $\mathcal{U}_i$. 

 In this paper we propose a new approach in the area of fuzzy optimization. Will show how the distributionally robust optimization can be naturally used in the setting of possibility theory. A preliminary version of the paper has appeared in~\cite{GKZ21a}.
We will start by defining a possibility distribution (a membership function of a fuzzy set) for $\tilde{\pmb{a}}_i$. This can be done by using both available data or experts knowledge.
Following the interpretation of possibility distribution~\cite{BD06, DD06}, 
we will define an ambiguity set of probability distributions $\mathcal{P}_i$ for $\tilde{\pmb{a}}_i$.  Finally, we will apply constraints of the type~(\ref{drc}) to compute a solution. We will show that the model of uncertainty assumed leads to a family of linear or second order cone constraints. So, the resulting problem is tractable for important cases of~(\ref{pf}), for example for the class of linear programming problems. \begin{rem}
\label{rcbprec}
If one uses~(\ref{drc}), then the assumption that $\pmb{c}$ and $b_i$, $i\in [m]$, are precise in~(\ref{pf}) causes no loss of generality. Indeed, if $\tilde{\pmb{c}}$ is imprecise, then we can minimize an auxiliary variable $t$, subject to the imprecise constraint $\tilde{\pmb{c}}^T\pmb{x}-t\leq 0$
($t$   reflects the possible realizations of objective function values).
If $\tilde{b}_i$ is imprecise, then the $i$th constraint can be rewritten as $ \tilde{\pmb{a}}_i^T\pmb{x}- \tilde{b}_i x_{n+1}\leq 0$, where $x_{n+1}$ is an auxiliary variable such that $x_{n+1}=1$. In both cases, we can replace the imprecise constraint with a one of the form~(\ref{drc}).
\end{rem}

This paper is organized as follows. In Section~\ref{secposs} we recall basic notions of possibility theory, in particular its probabilistic interpretation assumed in this paper. In Section~\ref{secdiscr} we discuss the discrete uncertainty representation in which a possibility degree for each scenario is specified. Then the ambiguity set $\mathcal{P}_i$ contains a family of discrete probability distributions with a finite support $\mathcal{U}_i$.
We will show that~(\ref{drc}) is then equivalent to a family of linear constraints. In Section~\ref{secint} we consider the interval uncertainty representation, in which unknown probability distribution for $\tilde{\pmb{a}}_i$ has a compact continuous support. We propose a method of defining a continuous possibility distribution for $\tilde{\pmb{a}}_i$, which can be built by using available data or experts knowledge. We show that~(\ref{drc}) is then equivalent to a family of second order cone constraints.

\section{Possibility theory and imprecise probabilities}
\label{secposs}

Let $\Omega$ be a set of alternatives. A primitive object of possibility theory (see, e.g.,~\cite{BD06}) is a \emph{possibility distribution} $\pi: \Omega\rightarrow [0,1]$, which assigns to each element $u\in \Omega$ a \emph{possibility degree} $\pi(u)$. We only assume that $\pi$ is \emph{normal}, i.e. there is $u\in \Omega$ such that $\pi(u)=1$. Possibility distribution can be built by using available data or experts opinions (see, e.g.,~\cite{DP88}).
A possibility distribution $\pi$ induces the following possibility and necessity measures in~$\Omega$:
$$\Pi(A)=\sup_{u\in A} \pi(u),\; A \subseteq \Omega.$$
$${\rm N}(A)=1-\Pi(A^c)=1-\sup_{u\in A^c} \pi(u),\; A\subseteq \Omega,$$
where $A^c=\Omega\setminus A$ is the complement of $A$.
In this paper we assume that the possibility distribution $\pi$ represents uncertainty in $\Omega$, i.e. some knowledge about uncertain quantity $\tilde{u}$ taking values in $\Omega$. 
We now recall, following~\cite{DD06}, a probabilistic interpretation of the pair $[\Pi, {\rm N}]$ induced by the possibility distribution $\pi$.
 Define 
 \begin{align}
 \mathcal{P}(\pi)&=\{{\rm P}: \forall A \text{ measurable } {\rm N}(A)\leq {\rm P}(A)\}\nonumber\\
                         &=\{{\rm P}: \forall A \text{ measurable } \Pi(A)\geq {\rm P}(A)\}.\label{defpm}
 \end{align} 
 In this case $\sup_{{\rm P}\in \mathcal{P}(\pi)} {\rm P}(A)=\Pi(A)$ and $\inf_{{\rm P}\in \mathcal{P}(\pi)} {\rm P}(A)={\rm N}(A)$ (see~\cite{DD06, DP92, CA99}).  So, the possibility distribution $\pi$ in $\Omega$ encodes a family of probability measures in $\Omega$. Any probability measure ${\rm P}\in \mathcal{P}(\pi)$ is said to be \emph{consistent with}~$\pi$ and for any event $A\subseteq \Omega$ the inequalities
\begin{equation} 
\label{eqnpp}
 {\rm N}(A)\leq {\rm P}(A)\leq \Pi(A)
 \end{equation}
 hold.
  A detailed discussion on the expressive power of~(\ref{eqnpp}) can also be found in~\cite{TMD13}.

 Observe that $\mathcal{P}(\pi)$ is nonempty due the assumption that $\pi$ is normalized. Indeed, the probability distribution such that ${\rm P}(\{u\})=1$ for some $u\in \Omega$ such that $\pi(u)=1$ is in $\mathcal{P}(\pi)$. To see this consider two cases. If $u\notin A$, then ${\rm N}(A)=0$ and ${\rm P}(A)\geq {\rm N}(A)=0$. If $u\in A$, then ${\rm P}(A)=1\geq {\rm N}(A)$. 
 
\section{Discrete  uncertainty model}
\label{secdiscr}

Consider uncertain constraint $\tilde{\pmb{a}}^T\pmb{x}\leq b$, where   $\tilde{\pmb{a}}$ is a random vector in $\mathbb{R}^n$ with an unknown probability distribution (in order to simplify notation we skip the index~$i$ in the constraint). 
In this section we assume that the support $\mathcal{U}$ of $\tilde{\pmb{a}}$ is finite, i.e. $\mathcal{U}=\{\pmb{a}_1,\dots,\pmb{a}_K\}$ is the set of all possible, explicitly listed scenarios (realizations of $\tilde{\pmb{a}}$) which can occur with a positive probability. We thus consider the discrete uncertainty model~\cite{KY97,KZ16b}.
Let $\mu$ be  a membership function of a  fuzzy set  in~$\mathcal{U}$, $\mu:\mathcal{U}\rightarrow [0,1]$.
We will interpret~$\mu$ as a possibility distribution~$\pi$  in the scenario set $\mathcal{U}$,
i.e. $\pi=\mu$. In order to simplify presentation, we will identify each scenario $\pmb{c}_i\in \mathcal{U}$ with its index~$i\in [K]$.
We can thus assume 
 that $\pi$ is a possibility distribution
  in the index set~$[K]$, $\pi: [K]\rightarrow [0,1]$.
   The value of $\pi(i)$ is the \emph{possibility degree} for scenario $\pmb{a}_i$.
 Assume also w.l.o.g. that $\pi(1)\geq \pi(2)\geq \dots \geq \pi(K)$ and thus $\pi(1)=1$. 
 
Given $\pi$, we can now
 construct an ambiguity set~$\mathcal{P}(\pi)$ of probability distributions for~$\tilde{\pmb{a}}$ by using~(\ref{defpm}).
  Because $\mathcal{P}(\pi)$ contains only discrete probability distributions in $[K]$, we get

 \[
 \mathcal{P}(\pi)\subseteq \{\pmb{p}\in [0,1]^K \,:  \sum_{i\in [K]} p_i=1\}
 \]
and we can describe~$\mathcal{P}(\pi)$ by the following system of linear constraints:
\begin{equation}
\label{e2}
	\begin{array}{llll}
		\displaystyle \sum_{i\in A} p_i\geq 1-\max_{i\notin A} \pi(i) &\forall\, A\subset [K], |A|\geq 1 \\
		\displaystyle \sum_{i\in [K]} p_i=1\\
		p_i\geq 0 & i\in [K]
	\end{array}
\end{equation}
In description~(\ref{e2})
the first set of constraints model the condition ${\rm P}(A)\geq 1-\Pi(A^c)= {\rm N}(A) \text{ for all events } A\subset [K]$,
($\Omega=[K]$).
Notice that the formulation~(\ref{e2}) has
 exponential number of constraints. 
In the following, we will show how to reduce the number  of constraints to $O(K)$.

Let us partition the set of indices $[K]$ into $\ell$ disjoint sets $I_1,\dots, I_{\ell}$ such that the elements of $I_j$ have the same possibility degrees and the possibility degrees of the elements in $I_j$ are greater than the possibility degrees of the elements in $I_k$ for any $j>k$. For example, let $\pmb{\pi}=(1,1,0.5,0.5,0.3, 0.3, 0.3, 0.1)$ be a possibility distribution 
(a fuzzy set)
in~$[K]$ for $K=8$
(i.e. a possibility distribution in the set of eight scenarios). Then $I_1=\{1,2\}$, $I_2=\{3,4\}$, $I_3=\{5,6,7\}$, $I_4=\{8\}$. Let $\pi^j$ be the possibility degree of the elements in $I_j$. In the example we have $\pi^1=1$, $\pi^2=0.5$, $\pi^3=0.3$ and $\pi^4=0.1$.
\begin{prop}
\label{prop1}
	System~(\ref{e2}) is equivalent to the following system of constraints:
	\begin{equation}
\label{e3}
	\begin{array}{llll}
		\displaystyle \sum_{i\in I_1\cup\dots \cup I_j} p_i\geq 1-\pi^{j+1} & j\in[\ell-1]\\
		\displaystyle \sum_{i\in [K]} p_i=1\\
		p_i\geq 0 & i\in [K]
	\end{array}
\end{equation}
\end{prop}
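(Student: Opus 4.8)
The plan is to prove the equivalence by showing that the feasible sets of (\ref{e2}) and (\ref{e3}) coincide; since the equality constraint and the nonnegativity constraints are identical in both systems, it suffices to compare the inequality families. One inclusion is immediate and the whole content lies in the reverse one.

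First I would note that every constraint of (\ref{e3}) is a particular instance of a constraint of (\ref{e2}). Indeed, fix $j\in[\ell-1]$ and take $A=I_1\cup\dots\cup I_j$. Its complement is $A^c=I_{j+1}\cup\dots\cup I_\ell$, and because the possibility degrees are nonincreasing across the blocks (so $\pi^{j+1}>\pi^{j+2}>\dots>\pi^{\ell}$), we have $\max_{i\notin A}\pi(i)=\pi^{j+1}$. Hence the $(\ref{e2})$-constraint for this $A$ reads exactly $\sum_{i\in I_1\cup\dots\cup I_j}p_i\geq 1-\pi^{j+1}$. Therefore any $\pmb{p}$ feasible for (\ref{e2}) is feasible for (\ref{e3}).

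For the converse, let $\pmb{p}$ satisfy (\ref{e3}) and fix an arbitrary $A\subset[K]$ with $|A|\geq 1$. Let $j^{*}\in[\ell]$ be the index of the block containing a scenario of largest possibility degree among those outside $A$, i.e. $\max_{i\notin A}\pi(i)=\pi^{j^{*}}$. The key observation is that every scenario whose possibility degree is strictly larger than $\pi^{j^{*}}$ must belong to $A$ (otherwise it would contradict the choice of $j^{*}$), and the set of such scenarios is precisely $I_1\cup\dots\cup I_{j^{*}-1}$; consequently $I_1\cup\dots\cup I_{j^{*}-1}\subseteq A$ and so $\sum_{i\in A}p_i\geq\sum_{i\in I_1\cup\dots\cup I_{j^{*}-1}}p_i$. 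If $j^{*}=1$, then the right-hand side of the $(\ref{e2})$-constraint for $A$ equals $1-\pi^{1}=0$ and the constraint holds trivially by $p_i\geq 0$. If $j^{*}\geq 2$, then $j^{*}-1\in[\ell-1]$, and the corresponding constraint of (\ref{e3}) yields $\sum_{i\in I_1\cup\dots\cup I_{j^{*}-1}}p_i\geq 1-\pi^{j^{*}}=1-\max_{i\notin A}\pi(i)$, which is exactly the $(\ref{e2})$-constraint for $A$. Hence $\pmb{p}$ satisfies (\ref{e2}).

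I do not expect a real obstacle: the argument is a monotonicity-and-bookkeeping observation about the block decomposition. The two points to handle with care are the boundary case $j^{*}=1$ (equivalently, $A^{c}$ contains a scenario of possibility degree $1$), which must be treated separately because there is no index $j=0$ in (\ref{e3}), and the verification that the assumption $\pi(1)\geq\dots\geq\pi(K)$ together with the definition of the blocks $I_1,\dots,I_\ell$ genuinely forces $\max_{i\notin A}\pi(i)$ to be one of the values $\pi^{j}$ and makes $\{i:\pi(i)>\pi^{j^{*}}\}$ equal to $I_1\cup\dots\cup I_{j^{*}-1}$.
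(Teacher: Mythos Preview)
Your proof is correct and follows essentially the same approach as the paper's: both directions are handled identically, and your index $j^{*}$ (the block of the largest possibility degree outside $A$) coincides with the paper's ``first index $k$ such that $I_k\not\subseteq A$''. The only cosmetic difference is this choice of bookkeeping index; the logical content is the same.
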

\begin{proof}
It is easy to verify that each feasible solution to~(\ref{e2}) is also feasible to~(\ref{e3}). Indeed,  for $A= I_1\cup\dots \cup I_j$, $j\in [\ell-1]$, we get $\max_{i\notin A} \pi(i)=\pi^{j+1}$. So, (\ref{e3}) is a subset of the constraints~(\ref{e2}).
 Assume that $\pmb{p}$ is feasible to~(\ref{e3}). Consider any set $A\subseteq [K]$, $|A|\geq 1$. If $A=[K]$, then $\pmb{p}$ is feasible to~(\ref{e2}). Assume that $A\subset [K]$ and let $k\in[\ell-1]$ be the first index such that $I_k\not\subseteq A$.  If $k=1$, then ${\rm N}(A)=0$ and the constraint in~(\ref{e2}) holds for $A$. Assume that $k>1$. The inequality from~(\ref{e3})
$$ \sum_{i\in I_1\cup\dots \cup I_{k-1}} p_i\geq 1-\pi^{k}$$
implies
$$\sum_{i\in A} p_i \geq \sum_{i\in I_1\cup\dots \cup I_{k-1}} p_i\geq 1-\pi^k=1-\max_{i\notin A} \pi(i),$$
where the last equality results from the fact that $I_k\not \subseteq A$.
In consequence the constraint in~(\ref{e2}) holds for $A$.
\end{proof}

For the sample possibility distribution $\pmb{\pi}=(1,1,0.5,0.5,0.3, 0.3, 0.3, 0.1)$, the ambiguity set $\mathcal{P}(\pi)$ can be described by the following system of constraints:
$$
\begin{array}{llll}
		p_1+p_2\geq 0.5 \\
		p_1+p_2+p_3+p_4\geq 0.7\\
		p_1+p_2+p_3+p_4+p_5+p_6+p_7\geq 0.9\\
		p_1+p_2+p_3+p_4+p_5+p_6+p_7+p_8=1\\
		p_i\geq 0 & i\in [K]
	\end{array}
$$
Using Proposition~\ref{prop1}, the value of $\max_{{\rm P}\in \mathcal{P}(\pi)} \mathrm{\mathbf{E}}_{\rm P}[\tilde{\pmb{a}}^T\pmb{x}]$, for a given~$\pmb{x}$, can be computed by using the following linear program:
\begin{equation}
\label{mod01}
	\begin{array}{lllll}
			\max \displaystyle \sum_{i\in [K]} p_i \pmb{a}^T_i\pmb{x}\\
				\displaystyle \sum_{i\in I_1\cup\dots \cup I_j} p_i\geq 1-\pi^{j+1} & j\in [\ell-1] \\
		\displaystyle \sum_{i\in [K]} p_i=1\\
		p_i\geq 0 & i\in [K]\\
	\end{array}
\end{equation}
\begin{thm}
\label{thdc}
Given $\pmb{x}\in \mathbb{X}$, the constraint
	\begin{equation}
\label{drc00}
\max_{{\rm P}\in \mathcal{P}(\pi)} \mathrm{\mathbf{E}}_{\rm P}[\tilde{\pmb{a}}^T\pmb{x}]\leq b
\end{equation}
is equivalent to the following system of linear constraints:
\begin{equation}
\label{mod02}
	\begin{array}{lllll}
			 \displaystyle \beta+\sum_{j\in [\ell-1]} \alpha_j (\pi^{j+1}-1)\leq b\\
				\displaystyle \beta-\sum_{j\in [\ell-1]:  i\in I_j} \alpha_j \geq \pmb{a}_i^T \pmb{x} & i\in [K]\\
				\alpha_j\geq 0 & j\in [\ell-1]\\
				\beta\in \Rset &
	\end{array}
\end{equation}
\end{thm}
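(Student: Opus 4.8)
The plan is to obtain~(\ref{mod02}) as the (reformulated) linear programming dual of~(\ref{mod01}) and then invoke strong duality. By the discussion preceding the theorem (which rests on Proposition~\ref{prop1}), the optimal objective value of~(\ref{mod01}) equals $\max_{{\rm P}\in\mathcal{P}(\pi)}\mathrm{\mathbf{E}}_{\rm P}[\tilde{\pmb{a}}^T\pmb{x}]$, so~(\ref{drc00}) holds for the given $\pmb{x}$ if and only if the optimal value of~(\ref{mod01}) is at most $b$.

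First I would check that strong duality applies to~(\ref{mod01}). The program is feasible: the vector $\pmb{p}$ with $p_1=1$ and $p_i=0$ for $i\ge 2$ lies in the probability simplex and satisfies every inequality $\sum_{i\in I_1\cup\dots\cup I_j}p_i\ge 1-\pi^{j+1}$, since $1\in I_1$ (so the left-hand side equals $1$) and $\pi^{j+1}\ge 0$. It is also bounded, because on the simplex the objective never exceeds $\max_{i\in[K]}|\pmb{a}_i^T\pmb{x}|<\infty$. Hence the optimal values of~(\ref{mod01}) and of its dual coincide and are both finite.

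Next I would write down the dual. Assigning a multiplier $\alpha_j\ge 0$ to each inequality constraint indexed by $j\in[\ell-1]$ and a free multiplier to the normalization $\sum_{i\in[K]}p_i=1$, and then substituting $-\beta$ for that free multiplier, one obtains the linear program
\begin{equation*}
\begin{array}{lll}
\min & \displaystyle \beta+\sum_{j\in[\ell-1]}\alpha_j(\pi^{j+1}-1) \\[2mm]
& \displaystyle \beta-\sum_{j\in[\ell-1]:\, i\in I_1\cup\dots\cup I_j}\alpha_j\ge \pmb{a}_i^T\pmb{x} & i\in[K]\\
& \alpha_j\ge 0 & j\in[\ell-1]\\
& \beta\in\Rset,
\end{array}
\end{equation*}
that is, the constraint system and objective of~(\ref{mod02}): the $i$th dual constraint collects one term $\alpha_j$ for every $j\in[\ell-1]$ such that $p_i$ occurs in the $j$th primal inequality, i.e. for every $j$ with $i\in I_1\cup\dots\cup I_j$, and the objective coefficient of $\alpha_j$ is $-(1-\pi^{j+1})=\pi^{j+1}-1$. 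Putting the pieces together: the optimal value of~(\ref{mod01}) is $\le b$ iff the optimal value of this dual is $\le b$ (strong duality) iff there exist $\alpha_j\ge 0$ and $\beta\in\Rset$ satisfying the dual constraints together with $\beta+\sum_{j\in[\ell-1]}\alpha_j(\pi^{j+1}-1)\le b$ --- which is exactly the solvability of~(\ref{mod02}). When~(\ref{mod02}) replaces the uncertain constraint inside problem~(\ref{pf}), the new variables $\alpha_j,\beta$ simply become additional decision variables alongside $\pmb{x}$, so ``equivalent'' is meant in the sense that both formulations admit the same set of feasible $\pmb{x}$.

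The routine parts are the transcription of the dual and the sign bookkeeping in the substitution $-\beta$. The only point requiring genuine care is the appeal to strong duality --- specifically, verifying feasibility and boundedness of~(\ref{mod01}) as above --- since it is this, and not merely weak duality, that lets us pass from ``$\max_{{\rm P}\in\mathcal{P}(\pi)}\mathrm{\mathbf{E}}_{\rm P}[\tilde{\pmb{a}}^T\pmb{x}]\le b$'' to the existence of a dual-feasible pair $(\alpha,\beta)$ certifying the bound, and conversely.
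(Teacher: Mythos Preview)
Your proof is correct and follows the same route as the paper: compute the LP dual of~(\ref{mod01}) and invoke strong duality, with the added care (which the paper omits) of checking primal feasibility and boundedness.

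One point worth flagging: your dual constraint reads
\[
\beta-\sum_{j\in[\ell-1]:\, i\in I_1\cup\dots\cup I_j}\alpha_j\ \ge\ \pmb{a}_i^T\pmb{x},
\]
which is indeed the correct dualization, since $p_i$ appears in the $j$th primal inequality exactly when $i\in I_1\cup\dots\cup I_j$. The paper's~(\ref{mod02}) and~(\ref{mod03}) instead write the index set as $\{j\in[\ell-1]:i\in I_j\}$; because the $I_j$ partition $[K]$, that set has at most one element and does not match the dual of~(\ref{mod01}). This appears to be a typographical slip in the paper's statement rather than an error on your side.
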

\begin{proof}
	For a fixed $\pmb{x}\in \mathbb{X}$,~(\ref{mod01}) is a linear programming problem, whose dual is:
	\begin{equation}
\label{mod03}
	\begin{array}{lllll}
			 \min & \displaystyle \beta+\sum_{j\in [\ell-1]} \alpha_j (\pi^{j+1}-1)\\
				& \displaystyle \beta-\sum_{j\in [\ell-1]:  i\in I_j} \alpha_j \geq \pmb{a}_i^T \pmb{x} & i\in [K]\\
				& \alpha_j\geq 0 & j\in [\ell-1]\\
				&\beta\in \Rset&
	\end{array}
\end{equation}
where $\beta$ and $\alpha_j$ are dual variables.
Now the strong duality for linear programming shows that
the optimal objective values of~(\ref{mod01}) and (\ref{mod03})
are the same (see, e.g.,~\cite[Theorem 3.1]{PS98}).
This gives us~(\ref{mod02}).
\end{proof}

Consider two boundary cases of the proposed model. If $\pi(k)=1$ for some $k\in [K]$, and $\pi(i)=0$ for each $i\neq k$, then $\pmb{a}_k$ is the only scenario which can occur. The constraint~(\ref{drc}) reduces then to 
$\pmb{a}_k^T\pmb{x}\leq b$. If $\pi(i)=1$ for all $i\in [K]$, then
$\mathcal{P}(\pi)=\{\pmb{p}\in [0,1]^K:  \sum_{i\in [K]} p_i=1\}$ and~(\ref{drc}) becomes the strict robust constraint~(\ref{rc}) for $\mathcal{U}=\{\pmb{a}_1,\dots,\pmb{a}_K\}$. Theorem~\ref{thdc} yields the following corollary.

\begin{cor}
If $\Xset$ is a polyhedron described by a system of linear constraints
((\ref{pf}) is an uncertain linear programming problem).
Then the  deterministic counterpart of~(\ref{pf}) with the constraints of
the type (\ref{drc00}) and in consequence of the type (\ref{mod02})
is a linear programming problem.
\label{cordc}
\end{cor}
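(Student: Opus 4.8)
The plan is to reduce the whole problem, one constraint at a time, to a linear program by invoking Theorem~\ref{thdc}. Restoring the index $i$, for each $i\in[m]$ the uncertain constraint $\tilde{\pmb{a}}_i^T\pmb{x}\leq b_i$ is handled through its distributionally robust counterpart $\max_{{\rm P}\in\mathcal{P}(\pi_i)}\mathrm{\mathbf{E}}_{\rm P}[\tilde{\pmb{a}}_i^T\pmb{x}]\leq b_i$, i.e.\ a constraint of the type~(\ref{drc00}) associated with the finite scenario set $\mathcal{U}_i=\{\pmb{a}_{i1},\dots,\pmb{a}_{iK_i}\}$ and its possibility distribution $\pi_i$. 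Theorem~\ref{thdc} asserts that, for a fixed $\pmb{x}$, such a constraint holds if and only if there exist real numbers $\beta_i$ and $\alpha_{i1},\dots,\alpha_{i,\ell_i-1}\geq 0$ satisfying the system~(\ref{mod02}) built from the partition $I_1^{(i)},\dots,I_{\ell_i}^{(i)}$ of $[K_i]$ and the degrees $\pi_i^{j+1}$.

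First I would introduce, for every $i\in[m]$, a fresh block of auxiliary variables $\beta_i\in\Rset$ and $\alpha_{ij}\geq 0$, $j\in[\ell_i-1]$, and append to the problem the corresponding copy of system~(\ref{mod02}). Since the scenario vectors $\pmb{a}_{ik}$, the right-hand sides $b_i$ and the numbers $\pi_i^{j+1}$ are fixed data, each copy is a finite collection of inequalities that are linear in the decision vector $\pmb{x}$ together with the auxiliary variables. Adjoining the linear constraints that describe the polyhedron $\Xset$ and keeping the linear objective $\pmb{c}^T\pmb{x}$ (which involves no auxiliary variable), one obtains an optimization problem over $\pmb{x}$ and all the $\beta_i,\alpha_{ij}$ in which every constraint and the objective are linear, i.e.\ a linear program, with $n+\sum_{i\in[m]}\ell_i$ variables and $O(\sum_{i\in[m]}K_i)$ additional constraints on top of those defining $\Xset$.

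The only point needing an argument rather than bookkeeping is the equivalence of this linear program with the deterministic counterpart of~(\ref{pf}): a vector $\pmb{x}$ is feasible for the latter exactly when it extends to a feasible point of the former, and the optimal values coincide. This follows from Theorem~\ref{thdc} applied separately to each $i\in[m]$, together with the fact that the auxiliary blocks attached to different constraints share no variables and do not interact; hence projecting the feasible set of the expanded problem onto the $\pmb{x}$-coordinates returns precisely the set of $\pmb{x}\in\Xset$ satisfying all $m$ constraints of the type~(\ref{drc00}), and, the objective depending on $\pmb{x}$ only, the two optimal values are equal. (If $\tilde{\pmb{c}}$ or some $\tilde{b}_i$ were imprecise as well, Remark~\ref{rcbprec} first rewrites them in the form~(\ref{drc}), after which the same reasoning applies verbatim.) I do not anticipate any genuine obstacle here; all the substantive work — the dualization that turns $\max_{{\rm P}\in\mathcal{P}(\pi_i)}\mathrm{\mathbf{E}}_{\rm P}[\cdot]$ into the linear system~(\ref{mod02}) — is already contained in Theorem~\ref{thdc}, so the corollary is essentially an observation that linearity is preserved under the reformulation.
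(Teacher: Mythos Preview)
Your proposal is correct and follows essentially the same approach as the paper: the corollary is stated without a separate proof, being presented as an immediate consequence of Theorem~\ref{thdc} (``Theorem~\ref{thdc} yields the following corollary''). Your write-up simply makes explicit the bookkeeping of introducing auxiliary variables $\beta_i,\alpha_{ij}$ for each constraint and observing that the resulting system is linear, which is exactly the intended reading.
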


Corollary~\ref{cordc} shows that
the resulting deterministic counterpart of uncertain linear programming problem~(\ref{pf}),
under the discrete  uncertainty model,
is polynomially solvable
and thus can be solved efficiently by  standard  off-the-shelf solvers.

\section{Interval uncertainty model}
\label{secint}
Let us 
 again  focus on an uncertain constraint $\tilde{\pmb{a}}^T\pmb{x}\leq b$, where   $\tilde{\pmb{a}}=(\tilde{a}_1,\dots,\tilde{a}_n)$
is a random vector in $\mathbb{R}^n$ with  an unknown  probability distribution having now a compact  continuous support $\mathcal{U}\subseteq \Rset^n$.  We will start by describing the support $\mathcal{U}$. Next, we will propose a continuous possibility distribution $\pi$ in $\mathcal{U}$ which will induce an ambiguity set of probability distributions $\mathcal{P}(\pi)$ according to~(\ref{defpm}). This possibility distribution can be provided by experts or can be estimated by using available data.
 Assume that for each component $\tilde{a}_j$ of $\tilde{\pmb{a}}$ a \emph{nominal} (expected, the most probable) value $\hat{a}_j$ and an interval $[\hat{a}_j-\underline{a}_j,\hat{a}_j+\overline{a}_j]$ containing possible values of $\tilde{a}_j$ are known. 
 Let
$$\mathcal{B}(0)=[\hat{a}_1-\underline{a}_1,\hat{a}_1+\overline{a}_1]\times\dots\times [\hat{a}_n-\underline{a}_n,\hat{a}_n+\overline{a}_n],$$
i.e. $\mathcal{B}(0)$ is a hyperrectangle containing possible scenarios $\pmb{a}$ (realizations of $\tilde{\pmb{a}}$).
The components of $\tilde{\pmb{a}}$ are often correlated. Also, the probability that a subset of components will take the extreme values can be very small or even~0.  So, $\mathcal{B}(0)$ is typically an overestimation of  the support $\mathcal{U}$ of $\tilde{\pmb{a}}$.
In robust optimization some  limit for the deviations of scenarios from the nominal vector $\hat{\pmb{a}}$ is often assumed, i.e. $\delta(\pmb{a})\leq \Gamma$, where $\delta(\pmb{a})$ is a deviation prescribed. The parameter $\Gamma\geq 0$ is called a \emph{budget} and it controls the amount of uncertainty in $\mathcal{U}$.  One can define, for example, $\delta(\pmb{a})=||\pmb{a}-\hat{\pmb{a}}||_{1}$, which leads to the continuous budgeted uncertainty~\cite{NO13}. In this paper, following~\cite{BN09, BN99} we use 
$$\delta(\pmb{a})= ||\pmb{B}(\pmb{a}-\hat{\pmb{a}})||_{2},$$
where $\pmb{B}$ is a given $n\times n$ matrix.
Notice that $\delta(\pmb{a})=(\pmb{a}-\hat{\pmb{a}})^T\pmb{\Sigma}(\pmb{a}-\hat{\pmb{a}})$, where $\pmb{B}=\pmb{\Sigma}^{\frac{1}{2}}$ for some postive semidefinite matrix $\pmb{\Sigma}$. For example, $\pmb{\Sigma}$ can be an estimation of the covariance matrix for $\tilde{\pmb{a}}$ (see, e.g.,~\cite{BS04r}).
Therefore
$$\mathcal{E}(0)=\{\pmb{a}\in \mathbb{R}^n: \delta(\pmb{a})\leq \Gamma\}$$
is an ellipsoid in $\mathbb{R}^n$. We postulate that $\mathcal{B}(0)\cap \mathcal{E}(0)$ contains all possible realizations of $\tilde{\pmb{a}}$, i.e. ${\rm P}(\mathcal{B}(0)\cap \mathcal{E}(0))=1$. Hence $\mathcal{B}(0)\cap \mathcal{E}(0)$ is the estimation of the support of $\tilde{\pmb{a}}$. In the following, we will construct a possibility distribution for $\tilde{\pmb{a}}$, which can be easily built  from  $\hat{a}_j, \underline{a}_j, \overline{a}_j$, $j\in [n]$, matrix $\pmb{B}$ and the budget~$\Gamma$.

 \begin{figure*}[ht!]
\centering
\includegraphics[width=12cm]{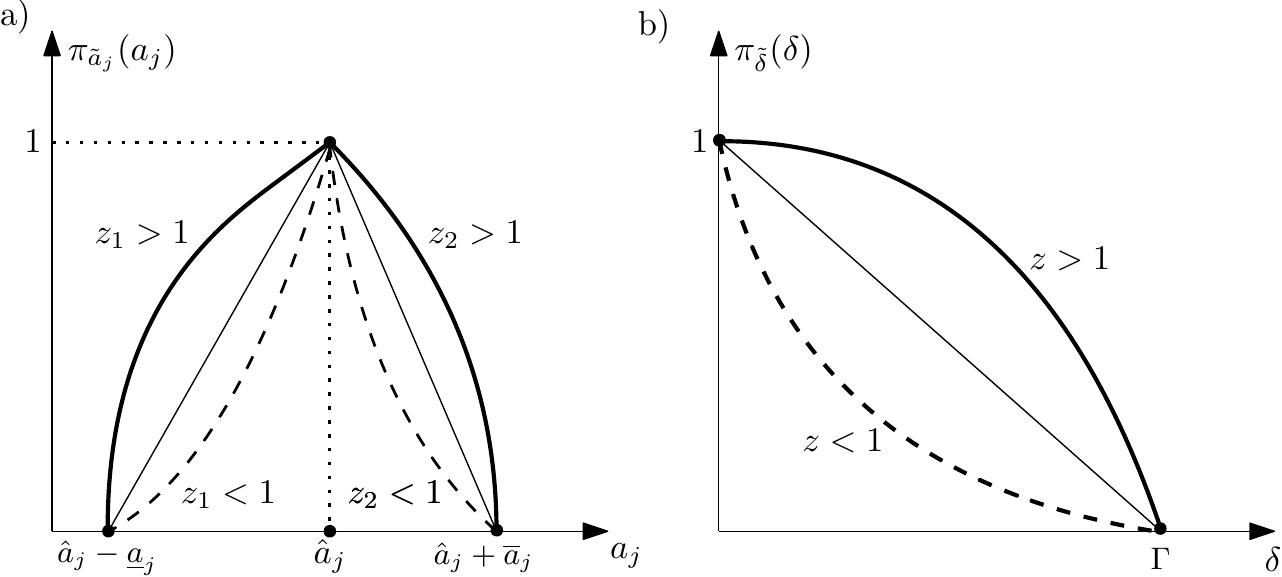}
\caption{Fuzzy intervals $\braket{\hat{a},\underline{a},\overline{a}}_{z_1-z_2}$ and $\braket{0,0,\Gamma}_{z}$ modeling the possibility distributions for $\tilde{a}_j$ and $\tilde{\delta}$. } \label{fig1}
\end{figure*}

Let $\pi_{\tilde{a}_j}$ be a possibility distribution for the component $\tilde{a}_j$ of $\tilde{\pmb{a}}$, such that that $\pi_{\tilde{a}_{j}}(\hat{a}_{j})=1$, $\pi_{\tilde{a}_j} (\hat{a}_j-\underline{a}_j)=\pi_{\tilde{a}_j}(\hat{a}_j+\overline{a}_j)=0$,  $\pi_{\tilde{a}_j}$ is continuous, strictly increasing in $[\hat{a}_j-\underline{a}_j,\hat{a}_j]$ and continuous strictly decreasing in $[\hat{a}_j,\hat{a}_j+\overline{a}_j]$. One can identify the possibility distribution $\pi_{\tilde{a}_j}$ with membership function~$\mu_{\tilde{a}_j}$ of  a fuzzy interval 
 $\braket{\hat{a}_j,\underline{a}_j,\overline{a}_j}_{z_1-z_2}$ shown in Figure~\ref{fig1}a (i.e. $\pi_{\tilde{a}_j}=\mu_{\tilde{a}_j}$).
 Let 
$$\tilde{a}_j(\lambda)=\{a_j: \pi_{\tilde{a}_j}(a_j)\geq \lambda\}\;\; \lambda\in (0,1]$$
be the $\lambda$-cut of~$\tilde{a}_j$.
It is easy to check that $\tilde{a}_j(\lambda)=[\underline{a}_j(\lambda),\overline{a}_j(\lambda)]$ 
is an interval
for each fixed  $\lambda\in (0,1]$.
Set $[\underline{a}_j(0),\overline{a}_j(0)]=[\hat{a}_i-\underline{a}_j, \hat{a}_j+\overline{a}_j]$. 
The function $\underline{a}_j(\lambda)$, the left profile of $\pi_{\tilde{a}_j}$, is continuous and strictly increasing and the function $\overline{a}_j(\lambda)$, the right profile of $\pi_{\tilde{a}_j}$,
 is continuous and strictly decreasing for $\lambda\in [0,1]$. 
For the fuzzy interval $\braket{\hat{a}_j,\underline{a}_j,\overline{a}_j}_{z_1-z_2}$, the  $\lambda$-cut of~$\tilde{a}_j$ can be computed by using the following formula:
$$\tilde{a}_j(\lambda)=[\hat{a}_j-\underline{a}_j(1-\lambda^{z_1}), \hat{a}_j+\overline{a}_j(1-\lambda^{z_2})]\; \lambda \in [0,1].$$
 
 We will assume that $\underline{a}_j, \overline{a}_j, z_1, z_2>0$. Then $\tilde{a}_j(\lambda)$ are nondegenerate intervals (they have nonempty interiors) for each $\lambda\in [0,1)$.
The numbers $z_1$ and $z_2$ control the amount of uncertainty for $\tilde{a}_j$ below and over $\hat{a}_j$, respectively. In particular, when $z_1,z_2\rightarrow 0$, then $\tilde{a}_j$ becomes $\hat{a}_j$. On the other hand, if $z_1,z_2\rightarrow \infty$, then $\tilde{a}_j$ becomes the closed interval $[\hat{a}_j-\underline{a}_j, \hat{a}_j+\overline{a}_j]$ (see Figure~\ref{fig1}).

The deviation $\delta(\pmb{a})$ depends on an unknown realization of $\tilde{\pmb{a}}$, so we can treat the deviation as an uncertain quantity $\tilde{\delta}$. We can now build the following possibility distribution for $\tilde{\delta}$:  $\pi_{\tilde{\delta}}(0)=1$, $\pi_{\tilde{\delta}}$ is continuous and strictly decreasing in $[0,\Gamma]$ and $\pi_{\tilde{\delta}}(\delta)=0$ if $\delta\notin [0,\Gamma]$. Let 
$$\tilde{\delta}(\lambda)=\{\delta\in \mathbb{R}^n: \pi_{\tilde{\delta}}(\delta)\geq \lambda\}=[0,\overline{\delta}(\lambda)] \;\; \lambda\in (0,1]$$
be the $\lambda$-cut of~$\tilde{\delta}$.
Fix $\overline{\delta}(0)=\Gamma$. We can identify $\pi_{\tilde{\delta}}$ with membership function
$\mu_{\tilde{\delta}}$
of a fuzzy interval~$\tilde{\delta}$ given, for example, in the form of $\braket{0,0,\Gamma}_{z}$ (see Figure~\ref{fig1}b).
The $\lambda$-cut of $\tilde{\delta}$ is then $\tilde{\delta}(\lambda)=[0,\Gamma(1-\lambda^z)]$ for $\lambda \in [0,1]$.

We now build the following joint possibility distribution $\pi:\mathbb{R}^n\rightarrow [0,1]$, for $\tilde{\pmb{a}}$:
\begin{equation}
\label{jpdi}
\pi(\pmb{a})=\min\{\pi_{\tilde{a}_1}(a_1),\dots, \pi_{\tilde{a}_n}(a_n), \pi_{\tilde{\delta}}(\delta(\pmb{a}))\}.
\end{equation}
The value of $\pi(\pmb{a})$ is the possibility degree for scenario $\pmb{a}\in \mathbb{R}^n$. 
 The first part of the possibility distribution, i.e. $\min\{\pi_{\tilde{a}_1}(a_1),\pi_{\tilde{a}_2}(a_2),\dots, \pi_{\tilde{a}_n}(a_n)\}$, is built from the possibility distributions of non-interacting components~\cite{DDC09}. The second part 
 $\pi_{\tilde{\delta}}(\delta(\pmb{a}))$ is added to model interactions between the components of~$\tilde{\pmb{a}}$.
Define the $\lambda$-cut of $\tilde{\pmb{a}}$
$$\mathcal{C}(\lambda)=\{\pmb{a}\in \mathbb{R}^n: \pi(\pmb{a})\geq \lambda\},\;\; \lambda\in (0,1].$$
It is easily seen that
\begin{align*}
\mathcal{C}(\lambda)=\{\pmb{a}\in\mathbb{R}^n \,:\, &a_j\in [\underline{a}_j(\lambda),\overline{a}_j(\lambda)], \; j\in [n],\\
& ||\pmb{B}(\pmb{a}-\hat{\pmb{a}})||_{2}\leq \overline{\delta}(\lambda)\}\; \lambda\in (0,1].
\end{align*}
Set $\mathcal{C}(0)=\mathcal{B}(0)\cap \mathcal{E}(0)$.
Observe that $\mathcal{C}(1)=\{\hat{\pmb{a}}\}$. Furthermore $\mathcal{C}(\lambda)$, $\lambda \in [0,1]$, is a family of nested sets, i.e. $\mathcal{C}(\lambda_1)\subset \mathcal{C}(\lambda_2)$ if $\lambda_1>\lambda_2$. By the continuity and monotonicity of $\underline{a}_j(\lambda)$, $\overline{a}_j(\lambda)$ and $\overline{\delta}(\lambda)$ we get
\begin{equation}
\label{en}
	{\rm N}(\mathcal{C}(\lambda))=1-\lambda,\;\; \lambda\in [0,1].
\end{equation}
Notice also that $\mathcal{C}(\lambda)$ is a closed convex set for each $\lambda\in [0,1]$ and has nonempty interior for all $\lambda\in [0,1)$.

\begin{prop}
	The following equality 
	$$\mathcal{P}(\pi)=\{{\rm P}\in \mathcal{PM}(\mathbb{R}^n) : {\rm P}(\mathcal{C}(\lambda))\geq 1-\lambda, \lambda\in [0,1]\}$$
	holds, where $\mathcal{PM}(\mathbb{R}^n)$ is the set of all probability measures on~$\mathbb{R}^n$.
\end{prop}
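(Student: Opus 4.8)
The plan is to prove the two inclusions separately. Write $\mathcal{Q}=\{{\rm P}\in \mathcal{PM}(\mathbb{R}^n): {\rm P}(\mathcal{C}(\lambda))\geq 1-\lambda,\ \lambda\in[0,1]\}$. The inclusion $\mathcal{P}(\pi)\subseteq \mathcal{Q}$ is the easy direction: if ${\rm P}\in\mathcal{P}(\pi)$, then by definition ${\rm P}(A)\geq {\rm N}(A)$ for every measurable $A$; applying this with $A=\mathcal{C}(\lambda)$ and using~(\ref{en}), namely ${\rm N}(\mathcal{C}(\lambda))=1-\lambda$, gives ${\rm P}(\mathcal{C}(\lambda))\geq 1-\lambda$ immediately, so ${\rm P}\in\mathcal{Q}$. (One should note $\mathcal{C}(\lambda)$ is closed, hence Borel measurable, so the substitution is legitimate.)

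The substantive direction is $\mathcal{Q}\subseteq \mathcal{P}(\pi)$: assuming ${\rm P}(\mathcal{C}(\lambda))\geq 1-\lambda$ for all $\lambda\in[0,1]$, I must show ${\rm N}(A)\leq {\rm P}(A)$ for \emph{every} measurable $A\subseteq\mathbb{R}^n$, equivalently $\Pi(A)\geq {\rm P}(A)$, i.e. $\sup_{\pmb{a}\in A}\pi(\pmb{a})\geq {\rm P}(A)$. Fix a measurable $A$ and set $s=\sup_{\pmb{a}\in A}\pi(\pmb{a})=\Pi(A)$. The key observation is that the complement $A^c$ then satisfies $\pi(\pmb{a})\leq s$ for... no — rather, the right move is: if $\pmb{a}\in A^c$ has $\pi(\pmb{a})>\lambda$ then $\pmb{a}\notin$ the relevant cut argument. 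Concretely, I claim $A \supseteq\{\pmb{a}: \pi(\pmb{a})>s\}$ is vacuous and instead argue via $A^c$: we have ${\rm N}(A)=1-\Pi(A^c)=1-\sup_{\pmb{a}\in A^c}\pi(\pmb{a})$. Put $t=\sup_{\pmb{a}\in A^c}\pi(\pmb{a})$, so ${\rm N}(A)=1-t$. For any $\lambda>t$, no point of $A^c$ has possibility degree $\geq\lambda$, hence $\mathcal{C}(\lambda)\subseteq A$, and therefore ${\rm P}(A)\geq {\rm P}(\mathcal{C}(\lambda))\geq 1-\lambda$. Letting $\lambda\downarrow t$ yields ${\rm P}(A)\geq 1-t={\rm N}(A)$, which is exactly what is needed. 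The case $t=1$ (i.e. ${\rm N}(A)=0$) is trivial since probabilities are nonnegative, so one may assume $t<1$ and the limit $\lambda\downarrow t$ stays inside $[0,1]$.

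The main obstacle — really the only point requiring care — is the measurability and the continuity/monotonicity infrastructure: one must be sure that $\mathcal{C}(\lambda)\subseteq A$ genuinely holds for $\lambda>t$ (this uses that $\mathcal{C}(\lambda)=\{\pmb{a}:\pi(\pmb{a})\geq\lambda\}$ by the very definition of the $\lambda$-cut, together with the explicit description of $\mathcal{C}(\lambda)$ recorded in the excerpt), and that the family $\{\mathcal{C}(\lambda)\}$ is nested and closed so that passing to the limit $\lambda\downarrow t$ is valid — here continuity from below of ${\rm P}$ along the increasing sequence $\mathcal{C}(t+1/k)\uparrow \bigcup_k\mathcal{C}(t+1/k)$ gives ${\rm P}(A)\geq {\rm P}(\bigcup_k\mathcal{C}(t+1/k))=\lim_k{\rm P}(\mathcal{C}(t+1/k))\geq \lim_k(1-t-1/k)=1-t$. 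These are exactly the properties established just before the proposition (nestedness of $\mathcal{C}(\lambda)$, closedness, and~(\ref{en})), so the argument closes cleanly.
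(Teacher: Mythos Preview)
Your proof is correct and follows essentially the same strategy as the paper's: both inclusions are handled identically, with the substantive direction argued via the nested $\lambda$-cuts and a limiting step. Your version of the limit (taking $\lambda\downarrow t$ with $t=\Pi(A^c)$ and using ${\rm P}(A)\geq 1-\lambda$ for each $\lambda>t$) is in fact slightly cleaner than the paper's, which writes ${\rm P}(A)\geq {\rm P}(\mathcal{C}(\lambda^*))$ for $\lambda^*=\inf\{\lambda:\mathcal{C}(\lambda)\subseteq A\}$ without explicitly checking that the infimum is attained.
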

\begin{proof}
	Let $\mathcal{P}'(\pi)=\{{\rm P}\in \mathcal{PM}(\mathbb{R}^n) : {\rm P}(\mathcal{C}(\lambda))\geq 1-\lambda, \lambda\in [0,1]\}$. We need to show that $\mathcal{P}'(\pi)=\mathcal{P}(\pi)$. Observe first that $\mathcal{P}(\pi)\subseteq \mathcal{P}'(\pi)$. To see this choose any probability distribution ${\rm P}\in \mathcal{P}(\pi)$. Since $\mathcal{C}(\lambda)\subseteq \mathbb{R}^n$ is an event for each $\lambda\in [0,1]$, we get ${\rm P}(\mathcal{C}(\lambda))\geq {\rm N}(\mathcal{C}(\lambda))=1-\lambda$ for each $\lambda\in [0,1]$ (see equation~(\ref{en})). In consequence ${\rm P}\in \mathcal{P}'(\pi)$. We now show that $\mathcal{P}'(\pi)\subseteq \mathcal{P}(\pi)$.
Let ${\rm P}\in \mathcal{P}'(\pi)$ and consider event $A\subseteq \mathbb{R}^n$. If $\mathcal{C}(1)=\{\hat{\pmb{a}}\}\not\subseteq A$, then ${\rm N}(A)=0$ and ${\rm P}(A)\geq {\rm N}(A)=0$. Suppose $\mathcal{C}(1)\subseteq A$ and let
	$$\lambda^*=
	\inf \{\lambda\in [0,1] : \mathcal{C}(\lambda)\subseteq A\}.
	$$
	Then ${\rm P}(A)\geq {\rm P}(\mathcal{C}(\lambda^*))\geq 1-\lambda^*={\rm N}(\mathcal{C}(\lambda^*)).$
	Choose $\lambda':=\lambda^*-\epsilon$ for arbitrarily small $\epsilon>0$. 
	As $\mathcal{C}(\lambda')\not \subseteq A$,  there is scenario $\pmb{a}$ such that $\pmb{a}\notin A$ and $\pi(\pmb{a})\geq \lambda'$. But  $\pmb{a}\in A$ for each scenario $\pmb{a}$ such that $\pi(\pmb{a})\geq \lambda^*$. Consequently $\sup_{\pmb{a}\notin A} \pi(\pmb{a})\in [\lambda',\lambda^*]$. Because $\epsilon\rightarrow 0$, ${\rm N}(A)=1-\lambda^*$.
	 Hence ${\rm P}(A)\geq {\rm N}(A)$ and ${\rm P}\in \mathcal{P}(\pi)$.
\end{proof}

In order to construct a tractable reformulation of~(\ref{drc}) for $\mathcal{P}(\pi)$ we need to use a discretization of $\mathcal{P}(\pi)$.
Choose integer $\ell\geq 0$ and set $\Lambda=\{0,1,\dots,\ell\}$. Define $\lambda_i=i/\ell$, $i\in \Lambda$. We consider the following ambiguity set:
\begin{equation}
\mathcal{P}^{\ell}(\pi)=\{{\rm P}\in \mathcal{PM}(\mathbb{R}^n) : {\rm P}(\mathcal{C}(\lambda_i))\geq 1-\lambda_i: i\in \Lambda\}.
\label{asi}
\end{equation}
Set $\mathcal{P}^{\ell}(\pi)$ is a discrete approximation of $\mathcal{P}(\pi)$. It is easy to verify that $\mathcal{P}(\pi)\subseteq \mathcal{P}^{\ell}(\pi)$ for any $\ell\geq 0$. By fixing sufficiently large constant $\ell$, we obtain arbitrarily close approximation of $\mathcal{P}(\pi)$.

 \begin{figure*}[ht]
\centering
\includegraphics[width=10cm]{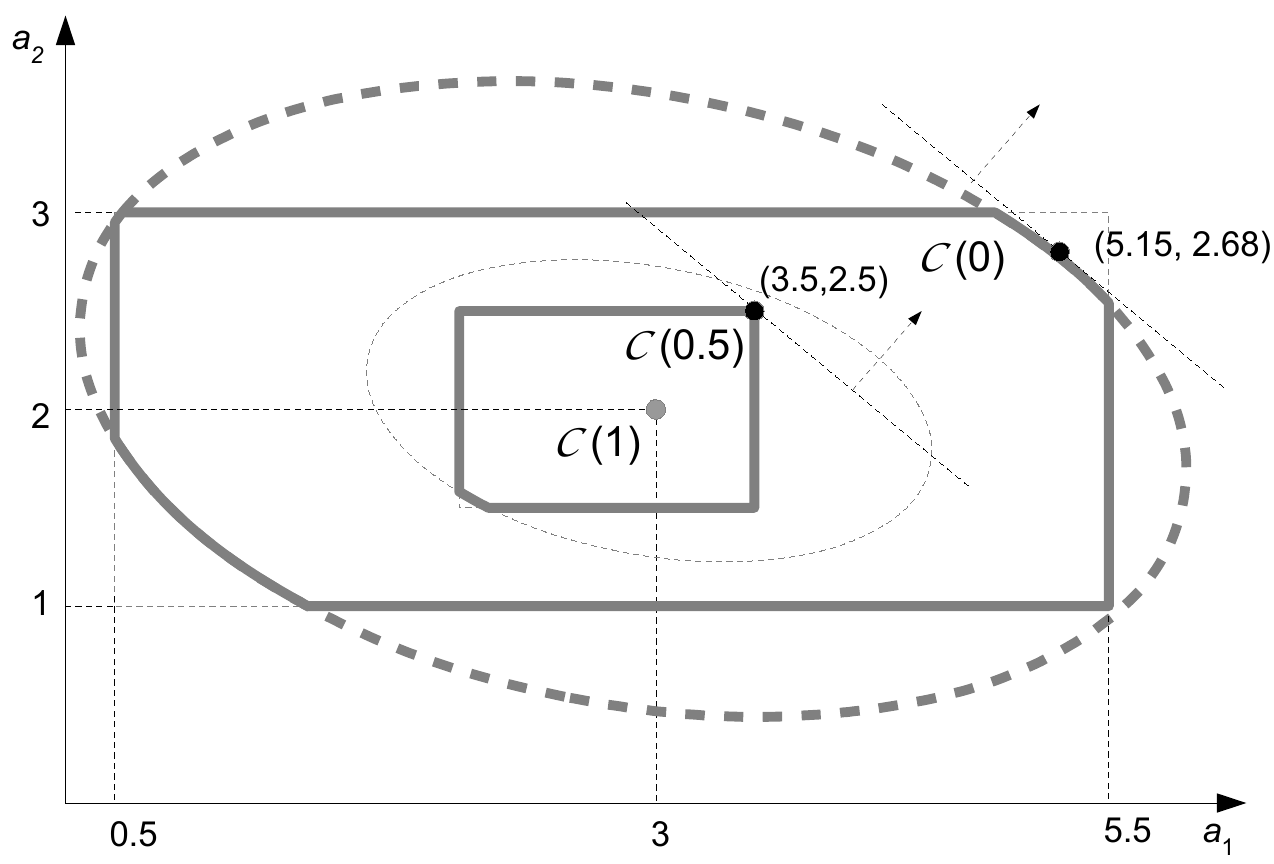}
\caption{The ambiguity set $\mathcal{P}^{2}(\pi)$, ${\rm P}(\mathcal{C}(0))\geq 1$, ${\rm P}(\mathcal{C}(0.5))\geq 0.5$, ${\rm P}(\mathcal{C}(1))\geq 0$} \label{fig2}
\end{figure*}

\begin{thm}
\label{thmmodint}
	The constraint
	\begin{equation}
\label{drc1}
\max_{{\rm P}\in \mathcal{P}^{\ell}(\pi)} \mathrm{\mathbf{E}}_{\rm P}[\tilde{\pmb{a}}^T\pmb{x}]\leq b
\end{equation}
is equivalent to the following system of 
second-order cone constraints:
\begin{equation}
\label{modfin}
		\begin{array}{llll}
			\displaystyle  w+\sum_{i\in \Lambda} (\lambda_i-1)v_i\leq b\\
				 \displaystyle  \gamma_i \overline{\delta}(\lambda_i) + \sum_{j\in [n]} \alpha_{ij} (\overline{a}_j(\lambda_i)-\hat{a}_j) + \sum_{j\in [n]} \beta_{ij}(\hat{a}_j-\underline{a}_j(\lambda_i))+\\
				 \begin{array}{lll}
				+ \hat{\pmb{a}}^T\pmb{x} \leq \displaystyle w-\sum_{j\leq i}  v_j &  i\in \Lambda \\
				  \alpha_{ij} - \beta_{ij} + \pmb{B}^{T}_j\pmb{u}_i=x_j & i\in \Lambda, j\in [n]\\
	 ||\pmb{u}_i||_{2}\leq \gamma_i & i\in \Lambda \\
	 \alpha_{ij}, \beta_{ij}\geq 0 &i\in \Lambda,  j\in[n]\\
	 v_i,\gamma_i \geq 0 & i\in \Lambda\\
	 w\in \Rset, \pmb{u}_i\in \mathbb{R}^n &i\in \Lambda
		 \end{array}
	\end{array}
\end{equation}
where $\Lambda=\{0,1,\dots,\ell\}$ and $\pmb{B}_j$ is the $j$th column of $\pmb{B}$.
\end{thm}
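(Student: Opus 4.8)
The plan is to imitate the proof of Theorem~\ref{thdc}: first express $\max_{{\rm P}\in\mathcal{P}^{\ell}(\pi)}\mathrm{\mathbf{E}}_{\rm P}[\tilde{\pmb{a}}^T\pmb{x}]$ as the optimal value of a linear program in the ``probability masses'', dualize it, and then replace the inner maximizations over the $\lambda$-cuts $\mathcal{C}(\lambda_i)$ by their second-order cone duals. Fix $\pmb{x}\in\mathbb{X}$ and put $M_i=\max_{\pmb{a}\in\mathcal{C}(\lambda_i)}\pmb{a}^T\pmb{x}$, which is finite and attained because each $\mathcal{C}(\lambda_i)\subseteq\mathcal{C}(0)=\mathcal{B}(0)\cap\mathcal{E}(0)$ is compact. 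Since $\mathcal{C}(\lambda_0)\supseteq\mathcal{C}(\lambda_1)\supseteq\dots\supseteq\mathcal{C}(\lambda_{\ell})=\{\hat{\pmb{a}}\}$ is a nested family, every ${\rm P}\in\mathcal{P}^{\ell}(\pi)$ is supported on $\mathcal{C}(0)$ (because ${\rm P}(\mathcal{C}(0))\ge 1$), and I claim
\[
\max_{{\rm P}\in\mathcal{P}^{\ell}(\pi)}\mathrm{\mathbf{E}}_{\rm P}[\tilde{\pmb{a}}^T\pmb{x}]=\max\Bigl\{\sum_{i\in\Lambda}m_iM_i:\ \sum_{k\ge i}m_k\ge 1-\lambda_i\ (i\in\Lambda),\ \sum_{i\in\Lambda}m_i=1,\ m_i\ge 0\Bigr\},
\]
where $m_i$ represents the mass placed on the shell $\mathcal{C}(\lambda_i)\setminus\mathcal{C}(\lambda_{i+1})$ (and $m_{\ell}$ the mass at $\hat{\pmb{a}}$). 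Inequality ``$\le$'' follows by splitting any ${\rm P}$ over these shells and bounding $\pmb{a}^T\pmb{x}$ on shell $i$ by $M_i$; for ``$\ge$'', given a feasible $\pmb{m}$ one places mass $m_i$ at a maximizer of $\pmb{a}^T\pmb{x}$ over $\mathcal{C}(\lambda_i)$ and checks, using nestedness, that the resulting atomic measure lies in $\mathcal{P}^{\ell}(\pi)$. (One may instead dualize the infinite-dimensional moment LP over measures directly; the shell reduction has the advantage of needing only finite LP duality.)

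The LP above is feasible (${\rm P}=\delta_{\hat{\pmb{a}}}\in\mathcal{P}^{\ell}(\pi)$, since $\hat{\pmb{a}}\in\mathcal{C}(\lambda_i)$ for every $i$) and bounded, so strong LP duality applies and its value equals
\[
\min\Bigl\{w+\sum_{i\in\Lambda}(\lambda_i-1)v_i:\ M_i+\sum_{j\le i}v_j\le w\ (i\in\Lambda),\ v_i\ge 0,\ w\in\Rset\Bigr\},
\]
where $w$ is the multiplier of the normalization $\sum_i m_i=1$ and $v_i\ge 0$ those of the chance constraints (the multipliers $v_0$ and $v_{\ell}$ turn out to be inessential, since the corresponding primal constraints are, respectively, implied by normalization and vacuous). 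Hence constraint~(\ref{drc1}) holds if and only if there exist $w\in\Rset$ and $v_i\ge 0$ with $M_i+\sum_{j\le i}v_j\le w$ for all $i\in\Lambda$ and $w+\sum_{i\in\Lambda}(\lambda_i-1)v_i\le b$ --- the last inequality being precisely the first line of~(\ref{modfin}).

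It remains to make each $M_i$ conic-representable. Putting $\pmb{d}=\pmb{a}-\hat{\pmb{a}}$, the description of $\mathcal{C}(\lambda_i)$ yields $M_i=\hat{\pmb{a}}^T\pmb{x}+\max\{\pmb{d}^T\pmb{x}:\ \underline{a}_j(\lambda_i)-\hat{a}_j\le d_j\le\overline{a}_j(\lambda_i)-\hat{a}_j\ (j\in[n]),\ \|\pmb{B}\pmb{d}\|_2\le\overline{\delta}(\lambda_i)\}$, i.e.\ a linear maximization over the intersection of a box and an ellipsoid. A routine conic dualization --- with multipliers $\alpha_{ij},\beta_{ij}\ge 0$ for the box bounds and $(\gamma_i,\pmb{u}_i)$ in the self-dual Lorentz cone for $\|\pmb{B}\pmb{d}\|_2\le\overline{\delta}(\lambda_i)$ --- represents $M_i-\hat{\pmb{a}}^T\pmb{x}$ as the minimum of $\gamma_i\overline{\delta}(\lambda_i)+\sum_{j\in[n]}\alpha_{ij}(\overline{a}_j(\lambda_i)-\hat{a}_j)+\sum_{j\in[n]}\beta_{ij}(\hat{a}_j-\underline{a}_j(\lambda_i))$ subject to $\alpha_{ij}-\beta_{ij}+\pmb{B}_j^T\pmb{u}_i=x_j$, $\|\pmb{u}_i\|_2\le\gamma_i$, $\alpha_{ij},\beta_{ij},\gamma_i\ge 0$. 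Substituting these representations into the constraints $M_i+\sum_{j\le i}v_j\le w$ of the previous step converts the whole system into exactly~(\ref{modfin}): for ``(\ref{modfin})$\Rightarrow$(\ref{drc1})'' only weak conic duality is needed, while ``(\ref{drc1})$\Rightarrow$(\ref{modfin})'' uses strong conic duality, whose Slater condition holds because $\mathcal{C}(\lambda_i)$ has nonempty interior for $\lambda_i<1$, the remaining case $\lambda_{\ell}=1$ (where $\mathcal{C}(1)=\{\hat{\pmb{a}}\}$ and $M_{\ell}=\hat{\pmb{a}}^T\pmb{x}$) being immediate. The two dualizations are routine; the point that needs some care is the first step --- verifying that passing to finitely many shell masses (equivalently, that the moment LP over measures has no duality gap) loses nothing, together with the bookkeeping for the degenerate indices $i=0,\ell$, where the chance constraint is forced or vacuous.
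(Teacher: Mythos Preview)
Your proof is correct and uses the same three ingredients as the paper's proof: a shell decomposition over the nested cuts $\mathcal{C}(\lambda_i)$, LP duality for the outer probability problem, and conic (Lagrangian) duality for each inner maximization $\max_{\pmb{a}\in\mathcal{C}(\lambda_i)}\pmb{a}^T\pmb{x}$. The only substantive difference is the order of the first two steps. The paper first writes the left-hand side of~(\ref{drc1}) as an infinite-dimensional moment problem and invokes strong duality for such problems (citing~\cite{KL19}) to obtain a semi-infinite dual; only then does it partition $\mathcal{C}(\lambda_0)$ into the shells $\overline{\mathcal{C}}(\lambda_i)=\mathcal{C}(\lambda_i)\setminus\mathcal{C}(\lambda_{i+1})$ and argue, via linearity, that $\max_{\pmb{a}\in\overline{\mathcal{C}}(\lambda_i)}\pmb{a}^T\pmb{x}=\max_{\pmb{a}\in\mathcal{C}(\lambda_i)}\pmb{a}^T\pmb{x}$. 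You instead reduce to the shells first---showing directly that the worst expectation equals a finite LP in the shell masses $m_i$---and then apply only finite LP duality. Your ordering is slightly more elementary (no appeal to moment-problem duality is needed), and you are also a bit more careful than the paper about the degenerate index $i=\ell$, where $\mathcal{C}(1)=\{\hat{\pmb{a}}\}$ has empty interior and Slater's condition for the inner conic program fails; the paper's Appendix simply asserts nonempty interior for all $i$, whereas you correctly note that this boundary case is trivial because both the primal and dual values equal $\hat{\pmb{a}}^T\pmb{x}$.
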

\begin{proof}
	The proof is 
adapted from~\cite{WKS14}.
The left hand side of~(\ref{drc1}) can be expressed as  the \emph{problem of moments}
(see,~e.g.,~\cite{SH01,WKS14}):
\begin{equation}
\label{mod1}
	\begin{array}{lll}
		\max &  \displaystyle \int_{\mathcal{C}(\lambda_0)} \pmb{a}^T\pmb{x} \; {\rm d}\mu(\pmb{a}) \\
			& \displaystyle \int_{\mathcal{C}(\lambda_0)} \pmb{1}_{[\pmb{a}\in \mathcal{C}(\lambda_i)]} \; {\rm d}\mu(\pmb{a})\geq 1-\lambda_i & i\in \Lambda\\
			& \displaystyle  \int_{\mathcal{C}(\lambda_0)} {\rm d}\mu(\pmb{a})=1\\
			& \mu \in \mathcal{M}_{+}(\mathbb{R}^n),
	\end{array}
\end{equation}
where $\mathcal{M}_{+}(\mathbb{R}^n)$ is the set of all nonnegative measures on~$\mathbb{R}^n$. Notice that the second (equality) constraint implies $\mu$ is a probability measure supported on $\mathcal{C}(\lambda_0)$.
The dual of  the problem of moments takes the following form (see, e.g.,~\cite{KL19}):
\begin{equation}
\label{mod2}
	\begin{array}{llll}
			\min  & \displaystyle w+\sum_{i\in \Lambda}(\lambda_i-1) v_i\\
				& \displaystyle w-\sum_{i\in \Lambda} \pmb{1}_{[\pmb{a}\in \mathcal{C}(\lambda_i)]} v_i\geq \pmb{a}^T\pmb{x} & \forall \pmb{a} \in \mathcal{C}(\lambda_0) \\
				& v_i\geq 0 & i\in \Lambda\\
				& w\in \Rset &	
\end{array}
\end{equation}
Strong duality
 implies that the optimal objective values of~(\ref{mod1}) and (\ref{mod2}) are the same
(see, e.g.,~\cite[Theorem~1, Corollary~1]{KL19}). Define 
$$\overline{\mathcal{C}}(\lambda_i)=\mathcal{C}(\lambda_i)\setminus \mathcal{C}(\lambda_{i+1}),\; i\in[\ell-1]$$
and $\overline{\mathcal{C}}(\lambda_{\ell})=\mathcal{C}(\lambda_{\ell})$.
Hence $\overline{C}(\lambda_i)$, $i\in \Lambda$, form a partition of the support $\mathcal{C}(\lambda_0)$ into $\ell+1$ disjoint sets.
Model (\ref{mod2}) can be then rewritten as follows:
\begin{equation}
\label{mod3}
	\begin{array}{llll}
			\min  & \displaystyle w+\sum_{i\in \Lambda}(\lambda_i-1) v_i\\
				& \displaystyle w-\sum_{i\in \Lambda} \pmb{1}_{[\pmb{a}\in \mathcal{C}(\lambda_i)]} v_i\geq \pmb{a}^T\pmb{x} & \forall \pmb{a} \in \overline{\mathcal{C}}(\lambda_i), i\in \Lambda \\
				& v_i\geq 0 & i\in \Lambda\\
				&w\in \Rset &
	\end{array}
\end{equation}
which is equivalent to
\begin{equation}
\label{mod4}
	\begin{array}{llll}
			\min  & \displaystyle w+\sum_{i\in \Lambda}(\lambda_i-1) v_i\\
				& \displaystyle w-\sum_{j\leq i}  v_j\geq \pmb{a}^T\pmb{x} & \forall \pmb{a} \in \overline{\mathcal{C}}(\lambda_i), i\in \Lambda \\
				& v_i\geq 0 & i\in \Lambda\\
				& w\in \Rset &
	\end{array}
\end{equation}
The $i$th constraint in~(\ref{mod4}) can be reformulated as
\begin{equation}
\label{f00}
\max_{\pmb{a} \in \overline{\mathcal{C}}(\lambda_i)} \pmb{a}^T\pmb{x}\leq w-\sum_{j\leq i} v_j.
\end{equation}
By the linearity of $\pmb{a}^T\pmb{x}$ in $\pmb{a}$, the left hand side is maximized at the boundary of $\overline{\mathcal{C}}(\lambda_i)$, which coincides with the boundary of $\mathcal{C}(\lambda_i)$. Hence~(\ref{f00}) is equivalent to
$$\max_{\pmb{a} \in \mathcal{C}(\lambda_i)} \pmb{a}^T\pmb{x}\leq w-\sum_{j\leq i} v_j.$$
The left hand side of this inequality, by  definition of $\mathcal{C}(\lambda_i)$, yields the following  problem:
$$
\begin{array}{lll}
	\max &  \pmb{a}^T\pmb{x}\\
		& a_j\leq \overline{a}_j(\lambda_i) & j\in [n]\\\
		& -a_j\leq -\underline{a}_j(\lambda_i) & j\in[n]\\
		&\displaystyle  ||\pmb{B}(\pmb{a}-\hat{\pmb{a}})||_{2}\leq \overline{\delta}(\lambda_i)&\\
		& \pmb{a}\in \Rset^n &
\end{array}
$$
Let us substitute $\pmb{y}=\pmb{a}-\hat{\pmb{a}}$, which yields:
$$
\begin{array}{lll}
	\max &  \pmb{y}^T\pmb{x}+\hat{\pmb{a}}^T\pmb{x}\\
		& y_j\leq -\hat{a}_j+\overline{a}_j(\lambda_i) & j\in [n]\\\
		& -y_j\leq -\underline{a}_j(\lambda_i)+\hat{a}_j & j\in [n]\\
		&\displaystyle  ||\pmb{B}\pmb{y}||_{2}\leq \overline{\delta}(\lambda_i)&\\
		&\pmb{y}\in \Rset^n &
\end{array}
$$
Introducing new variables $\pmb{z}$ leads to the following model:
\begin{equation}
\label{mod04}
\begin{array}{lll}
	\max &  \pmb{y}^T\pmb{x}+\hat{\pmb{a}}^T\pmb{x}\\
		& y_j\leq -\hat{a}_j+\overline{a}_j(\lambda_i) & j\in [n]\\\
		& -y_j\leq -\underline{a}_j(\lambda_i)+\hat{a}_j & j\in [n]\\
		& \pmb{B}\pmb{y}-\pmb{z}=\pmb{0}&\\
		&\displaystyle  ||\pmb{z}||_{2}\leq \overline{\delta}(\lambda_i)&\\
		&\pmb{y}, \pmb{z}\in \Rset^n &
\end{array}
\end{equation}
The dual to~(\ref{mod04}) is (see the Appendix):
\begin{equation}
\label{mod05}
\begin{array}{lll}
	\min & \displaystyle \gamma \overline{\delta}(\lambda_i) + \sum_{j=1}^n \alpha_j (\overline{a}_j(\lambda_i)-\hat{a}_j) + \sum_{j=1}^n \beta_j(\hat{a}_j-\underline{a}_j(\lambda_i))+ \\
	& +\hat{\pmb{a}}^T\pmb{x}\\
	& \begin{array}{ll}
		\alpha_j - \beta_j + \pmb{B}^{T}_j\pmb{u}=x_j & j\in [n]\\
		 ||\pmb{u}||_{2}\leq \gamma \\
		 \alpha_j, \beta_j,\gamma\geq 0 & j\in [n]\\
		 \pmb{u}\in \Rset^n &
		 \end{array}
\end{array}
\end{equation}
where $\pmb{B}_j$ is the $j$th column of $\pmb{B}$. The strong duality (see Appendix) implies that~(\ref{mod04}) and~(\ref{mod05}) have the same optimal objective function values.
Model~(\ref{mod05}) together with~(\ref{mod4}) yield~(\ref{modfin}).
\end{proof}

 Theorem~\ref{thmmodint} leads to the following corollary:
\begin{cor}
If $\Xset$ is a polyhedron described by a system of linear constraints
(i.e. (\ref{pf}) is an uncertain linear programming problem),
then the  deterministic counterpart of~(\ref{pf}) with the constraints of
the type (\ref{drc1}) is a second-order cone program.
\label{cordci}
\end{cor}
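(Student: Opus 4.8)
\emph{Proof proposal.} The plan is to apply Theorem~\ref{thmmodint} constraint by constraint. Reinstating the index $i\in[m]$ that was suppressed throughout Section~\ref{secint}, under the interval model each uncertain constraint $\tilde{\pmb{a}}_i^T\pmb{x}\leq b_i$ is replaced by its distributionally robust counterpart $\max_{{\rm P}\in\mathcal{P}^{\ell}(\pi_i)}\mathrm{\mathbf{E}}_{\rm P}[\tilde{\pmb{a}}_i^T\pmb{x}]\leq b_i$ of the form~(\ref{drc1}), built from the data $\hat{a}_{ij},\underline{a}_{ij},\overline{a}_{ij}$, the matrix $\pmb{B}_i$ and the budget $\Gamma_i$ associated with the $i$th constraint. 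By Theorem~\ref{thmmodint}, this single constraint is equivalent to the system~(\ref{modfin}) of second-order cone constraints, where the auxiliary variables $w,v_i,\gamma_i,\alpha_{ij},\beta_{ij},\pmb{u}_i$ are introduced afresh (and privately) for each constraint of~(\ref{pf}); the equivalence is in the sense that a fixed $\pmb{x}$ satisfies~(\ref{drc1}) if and only if there exist values of these auxiliary variables making~(\ref{modfin}) feasible. Consequently the deterministic counterpart of~(\ref{pf}) is obtained by conjoining, over all $i\in[m]$, the systems~(\ref{modfin}) together with the linear description of $\Xset$, and minimizing the linear objective $\pmb{c}^T\pmb{x}$.

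It then remains to argue that the resulting problem is a second-order cone program. Its objective is linear in the decision vector, which now consists of $\pmb{x}$ together with all the auxiliary variables. Its constraints are of three kinds: the linear inequalities describing the polyhedron $\Xset$; for each $i\in[m]$ and each level index in $\Lambda=\{0,1,\dots,\ell\}$, the linear equalities and inequalities appearing in~(\ref{modfin}); and, for each such $i$ and level, the norm constraints $||\pmb{u}_i||_{2}\leq\gamma_i$. Linear (in)equalities are degenerate second-order cone constraints, and a finite conjunction of second-order cone constraints over a finite-dimensional variable vector is again a second-order cone representable set. Hence the deterministic counterpart minimizes a linear function over such a set, i.e. it is a second-order cone program.

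The only point requiring care is bookkeeping rather than a genuine obstacle: one must make explicit that the auxiliary variables of~(\ref{modfin}) are local to each constraint $i$, so that the overall model has $n+\sum_{i\in[m]}O(n\ell)$ variables and $O(mn\ell)$ constraints, all within the conic format; and one should note that if, for some constraint, the ellipsoidal part of $\mathcal{C}(\lambda)$ is inactive, the corresponding norm constraint simply disappears and leaves a purely linear constraint, which is still a special case of a second-order cone constraint. As in the discrete case (cf.\ Corollary~\ref{cordc}), it follows that the problem can be solved in polynomial time by interior-point methods and handled directly by standard conic solvers. $\qed$
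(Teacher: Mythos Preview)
Your proposal is correct and follows exactly the route the paper intends: the corollary is stated without a separate proof, as an immediate consequence of Theorem~\ref{thmmodint}, and your argument simply spells out that applying~(\ref{modfin}) to each constraint $i\in[m]$ (with fresh auxiliary variables) and adjoining the linear description of $\Xset$ yields a linear objective over a finite conjunction of linear and second-order cone constraints. The only thing to add is that the paper leaves all of this implicit, so your write-up is more detailed than the paper's own treatment rather than different from it.
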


Corollary~\ref{cordci} shows that
the resulting deterministic counterpart of uncertain linear programming problem~(\ref{pf}),
under the interval uncertainty model,
 is polynomially solvable (see, e.g.,~\cite{BV08})
and thus can solve efficiently by
some standard  off-the-shelf solvers like \texttt{IBM ILOG CPLEX}~\cite{CPLEX}.

Consider a sample  of optimization problem~(\ref{pf}) in which the value of the objective function 
$\tilde{a}_1x_1+\tilde{a}_2x_2$ is uncertain and two constraints:  $x_1\geq 2.74$ and $x_2 \geq 3.3$
are precisely known.
The uncertainty of the  coefficients $\tilde{a}_1$ and $\tilde{a}_2$
is modeled by fuzzy intervals $\tilde{a}_1=\braket{3,2.5,2.5}_{1-0.32}$ and $\tilde{a}_2=\braket{2,1,1}_{1-1}$, respectively,
regarded as possibility distributions for their values. A possibility distribution for uncertain deviation $\tilde{\delta}$ is 
prescribed by fuzzy interval $\tilde{\delta}=\braket{0,0,6}_{1}$, $\Gamma=6$, and
$\pmb{B}=\left[\begin{array}{ll} 2 & 2.5 \\ 1 & -3\end{array}\right]$. By~(\ref{jpdi}),
we obtain 
a joint possibility distribution~$\pi$ for 
 $\tilde{\pmb{a}}=(\tilde{a}_1,\tilde{a}_2)$.
Fix $\ell=2$, which leads to the ambiguity set $\mathcal{P}^2(\pi)$ for $\tilde{\pmb{a}}=(\tilde{a}_1,\tilde{a}_2)$ 
shown in Figure~\ref{fig2},  built according to~(\ref{asi}). In particular, the set $\mathcal{C}(0)$, being the intersection of $[0.5,5.5]\times[1,3]$ and the ellipse $\{\pmb{a}: ||\pmb{B}(\pmb{a}-\hat{\pmb{a}})||_2\leq 6\}$ contains all possible scenarios.
Applying  now the distributionally robust approach in the possibilistic setting gives:
\begin{equation}
\label{drex}
	\begin{array}{lll}
		\min  &\displaystyle \max_{{\rm P} \in \mathcal{P}^{\ell}(\pi)}\mathrm{\mathbf{E}}_{\rm P}[\tilde{a}_1x_1+\tilde{a}_2x_2] \\
			& x_1\geq 2.74\\
			& x_2 \geq 3.3
	\end{array}
\end{equation}
Remark~\ref{rcbprec} shows how to convert 
the above problem with the uncertain objective  function to the one   with the precise objective
function and the uncertain constraint.
Theorem~\ref{thmmodint}  (see Corollary~\ref{cordci}) now leads to a second-order cone program
that is equivalent to (\ref{drex}).
An optimal solution of the program is $x_1=2.74$ and $x_2=3.3$. Let us focus on evaluating the objective function
$$\max_{{\rm P} \in \mathcal{P}^2(\pi)}\mathrm{\mathbf{E}}_{\rm P}[2.74\tilde{a}_1+3.3\tilde{a}_2].$$
The worst probability distribution ${\rm P}$ in $\mathcal{P}^2(\pi)$ assigns probability 0.5 to scenario $(5.15, 2.68)\in \mathcal{C}(0)$ and probability 0.5 to scenario $(3.5, 2.5)$ in $\mathcal{C}(0.5)$. These two points can be obtained by maximizing the linear function $2.74a_1+3.3a_2$ over the convex sets $\mathcal{C}(0)$ and $\mathcal{C}(0.5)$, respectively (see Figure~\ref{fig2}).
 We thus get $\max_{{\rm P} \in \mathcal{P}^2(\pi)}\mathrm{\mathbf{E}}_{\rm P}[2.74\tilde{a}_1+3.3\tilde{a}_2]=0.5\cdot(2.74\cdot 5.15+3.3\cdot 2.68)+0.5\cdot(2.74\cdot3.5+3.3\cdot 2.5)=20.39$.

%
%

\section{Risk aversion modeling}
\label{secrisk}

In this section we will show how the ambiguity set $\mathcal{P}^{\ell}(\pi)$ can be relaxed to take individual decision maker risk aversion into account.  Observe that the lower bounds on the probabilities for the sets $\mathcal{C}(\lambda_i)$ in $\mathcal{P}_{\pi}^{\ell}$ change uniformly. In consequence,  a worst probability distribution uniformly assigns probabilities to points on the boundaries of $\mathcal{C}(\lambda_0),\dots,\mathcal{C}(\lambda_{\ell-1})$. An example is shown in Figure~\ref{fig2}, where (for $\ell=2$) probability 0.5 is assigned to points on the boundaries of $\mathcal{C}(0)$ and $\mathcal{C}(0.5)$.
It has been observed that risk averse decision makers can perceive worst coefficient realizations as more probable (see, e.g.,~\cite{QU82, DW01}). We now propose an method of taking this risk-aversion into account.
 Let 
$$g_{\rho}(z)=\frac{1}{1-\rho}(1-\rho^z)$$ 
for $\rho\in (0,1)$ and $z\in [0,1]$. The function $g_{\rho}(z)$ is continuous, concave,  increasing in $[0,1]$ and such that $g_{\rho}(0)=0$ and $g_{\rho}(1)=1$. If $\rho\rightarrow 0$, then $g_{\rho}(z)\rightarrow 1$ for each $z\in (0,1]$. On the other hand, if $\rho\rightarrow 1$, then $g_{\rho}(z)\rightarrow z$ for each $z\in [0,1]$, so $g_{\rho}(z)$ tends to 
a linear function (see Fig.~\ref{figg}).

\begin{figure}[ht]
\centering
\includegraphics[width=6cm]{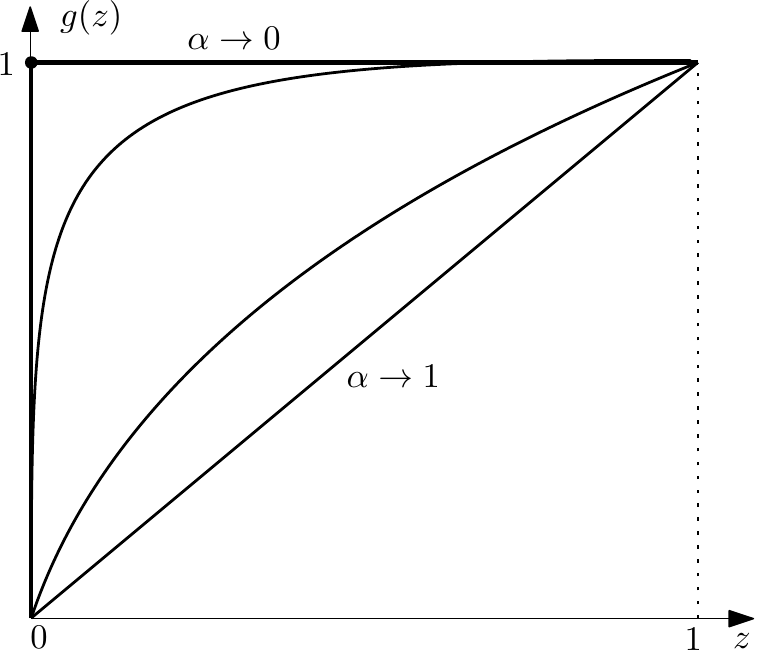}
\caption{Function $g_{\rho}(z)=\frac{1}{1-\rho}(1-\rho^z)$.} \label{figg}
\end{figure}

Fix $\rho\in (0,1)$ and  consider the following ambiguity set:
$$\mathcal{P}^{\ell,\rho}(\pi)=\left\{{\rm P}\in \mathcal{PM}(\mathbb{R}^n) : {\rm P}(\mathcal{C}(\lambda_i))\geq 1-g_{\rho}(\lambda_i),\; i\in \Lambda\right\}.$$
Since $1-\lambda_i\geq 1-g_{\rho}(\lambda_i)$, we still get a valid bound ${\rm P}(\mathcal{C}(\lambda_i))\geq {\rm N}(\mathcal{C}(\lambda_i))=1-\lambda_i\geq 1-g_{\alpha}(\lambda_i)$. Notice that the lower bound can be now smaller (i.e. larger probabilities can be assigned to worse scenarios), which reflects the probability distortion. 

As $\mathcal{P}_{\pi}^{\ell}\subseteq \mathcal{P}^{\ell, \rho}_{\pi}$ for each $\rho\in (0,1)$ and $\mathcal{P}^{\ell,\rho}_{\pi}\subseteq \mathcal{P}^{\ell,\rho'}_{\pi}$ if $\rho \geq \rho'$, decreasing $\rho$ leads to more conservative constraint~(\ref{drc}). If $\rho\rightarrow 1$, then $\mathcal{P}^{\ell,\rho}_{\pi}=\mathcal{P}_{\pi}^{\ell}$. On the other hand, if $\rho\rightarrow 0$, then $\mathcal{P}^{\ell, \rho}_{\pi}$ is the set of all probability measures with support $\mathcal{C}(0)$, and the constraint~(\ref{drc}) becomes 
$$\max_{\pmb{a}\in \mathcal{C}(0)} \pmb{a}^T\pmb{x}\leq b,$$
which is equivalent to the strict robust constraint~(\ref{rc}).
Theorem~\ref{thmmodint} remains valid for the ambiguity set $\mathcal{P}^{\ell, \rho}_{\pi}$. It is enough to replace the constant $\lambda_i$ with the constant $g_{\rho}(\lambda_i)$ in the first inequality in~(\ref{modfin}).

\section{Application to portfolio selection}
In this section we will apply the model constructed in Section~\ref{secint}
(see also Section~\ref{secrisk})
 to a portfolio selection problem.  We are given a set of $n$ assets whose returns form a random vector $\tilde{\pmb{a}}=(\tilde{a}_1,\dots,\tilde{a}_n)$. A portfolio is a vector $\pmb{x}\in [0,1]^n$, where $x_i$ is the share of the $i$th asset. Then $\tilde{\pmb{a}}^T\pmb{x}$ is the uncertain return of portfolio $\pmb{x}$ (accordingly, the quantity $-\tilde{\pmb{a}}^T\pmb{x}$ is the uncertain loss for $\pmb{x}$).
The probability distribution for $\tilde{\pmb{a}}$ is unknown. However, we have a sample of past realizations $\pmb{a}_1,\dots,\pmb{a}_K$ of $\tilde{\pmb{a}}$. Using this sample we can estimate the mean return $\hat{\pmb{a}}$  and the covariance matrix $\pmb{\Sigma}$ for $\tilde{\pmb{a}}$. The sample mean and covariance matrix for~7 assets, computed for a sample of 30 subsequent observations in Polish stock market, are 

$$
	\hat{\pmb{a}}=[0.057, -0.378, 0.324, -0.799, -0.873, -0.271, -0.323]
$$

$$
{
\footnotesize
\arraycolsep=1.6pt
\pmb{\Sigma}=\left[ \begin{array}{rrrrrrrr}
					7.469 & 0.149 & 0.099 & 0.076 & 2.225 & 0.044 & 1.649 \\
					 &  0.967 & 0.865 & -0.578 & -1.558 & 0.053 & -0.143\\
					 &	        & 3.714 & -0.454 & -1.265 & 1.188 & 0.320 \\
					 &              &	     & 2.188 & -0.529 & -0.152 & 0.525 \\
					 &		& &			  & 18.168 & -1.561 & 4.558 \\
					 &		&		&        &		& 12.745 & 1.391\\
					 &		&		&	&		&		& 5.371
			  \end{array}\right] }
$$

We fix $\pmb{B}=\pmb{\Sigma}^{\frac{1}{2}}$,  $\ell=100$, $\underline{a}_j=\overline{a}_j=6\cdot\sigma_j$, where $\sigma_j=\sqrt{\Sigma_{jj}}$ for  each $j\in [n]$. Using Chebyshev's inequality one can show that $\tilde{a}_j\in [\hat{a}_j-\underline{a}_j, \hat{a}_j+\overline{a}_j]$ with high probability.  We now use the fuzzy intervals $\braket{\hat{a}_j, \underline{a}_j, \overline{a}_j}_{1-1}$ to model the possibility distributions for the asset returns. We use linear membership functions. One can, however, further refine the information for the return values using different shapes, i.e. changing the parameters $z_1$ and $z_2$.  We use the fuzzy interval $\braket{0,\Gamma}_1$ for $\Gamma\in [0,50]$ to model the possibility distribution for the deviation $\tilde{\delta}$.
We consider the following problem:

\begin{equation}
\label{psf}
	\begin{array}{lllll}
		\displaystyle \min  & \displaystyle \max_{{\rm P}\in \mathcal{P}^{\ell}(\pi)} \mathrm{\mathbf{E}}_{\rm P}[-\tilde{\pmb{a}}^T\pmb{x}] \\
					    &\pmb{1}^T\pmb{x} =1 \\
					    &\pmb{x}\geq \pmb{0}
	\end{array}
\end{equation}

 \begin{figure}[ht]
\centering
\includegraphics[width=8cm]{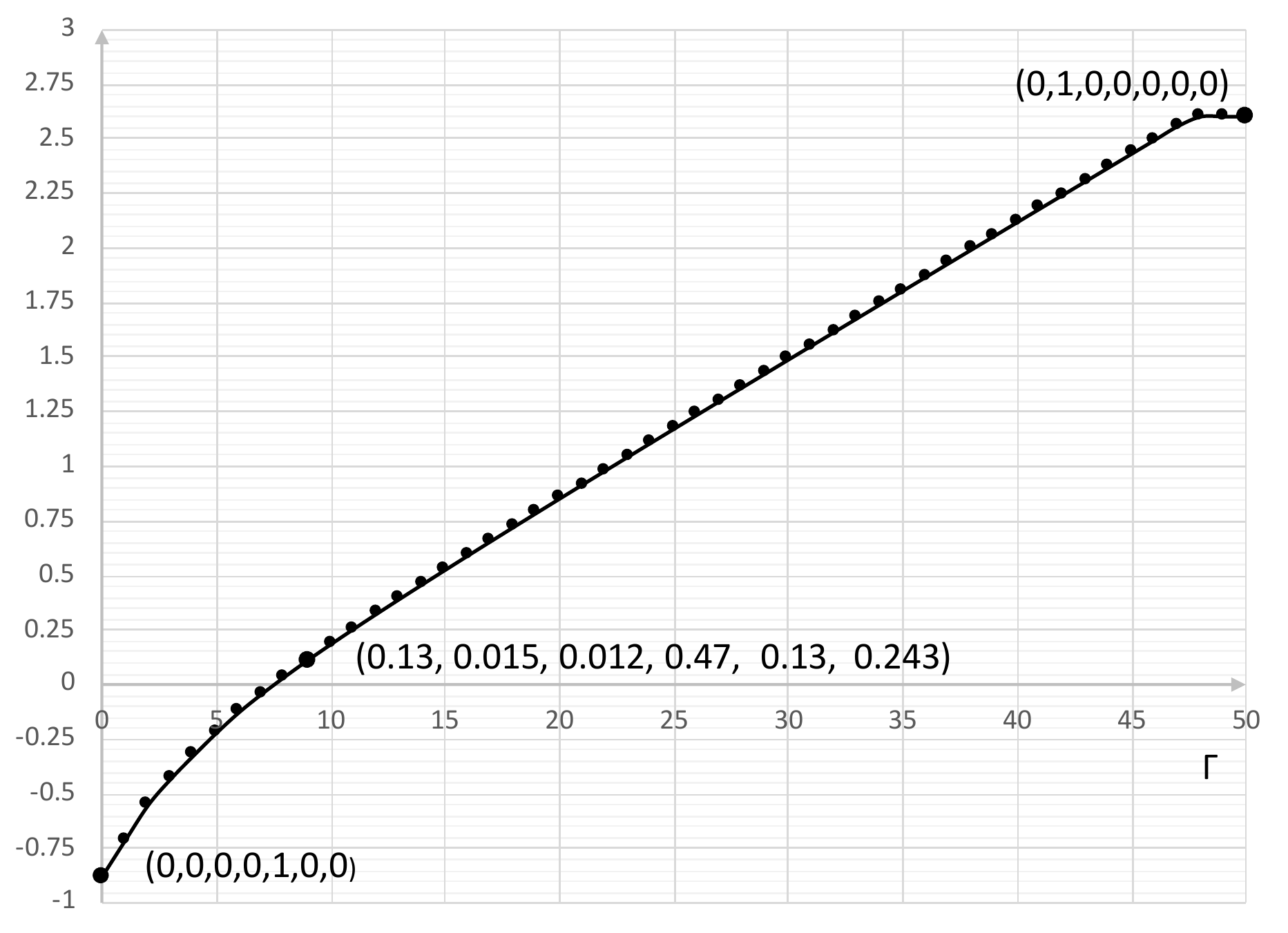}
\caption{Optimal objective values for various $\Gamma$.} \label{fig4}
\end{figure}

Figure~\ref{fig4} shows the optimal objective value of~(\ref{psf}) for $\Gamma=0,1,\dots,50$. In the first boundary case, $\Gamma=0$, there is no uncertainty and in the optimal portfolio all is allocated to the asset with the largest expected return (smallest expected loss). In the second boundary case, when $\Gamma\geq 48$, there is no limit on deviation from $\hat{\pmb{a}}$ and all is allocated to the asset with the smallest $\hat{a}_j+\overline{a}_j$. For the intermediate values of $\Gamma$, we get a family of various portfolios in which a diversification is profitable. One such portfolio is shown in Figure~\ref{fig4}.

Figure~\ref{fig5} shows the effect of taking the individual risk aversion into account. In this figure the optimal objective value of~(\ref{psf}) for $\rho\in (0,1)$ is shown (we fix $\Gamma=20$). For smaller values of $\rho$ the objective value of~(\ref{psf}) is larger. One can see that the optimal portfolio is adjusted (see Figure~\ref{fig5}) to take larger sets of admissible probability distributions into account.

 \begin{figure}[ht]
\centering
\includegraphics[width=8cm]{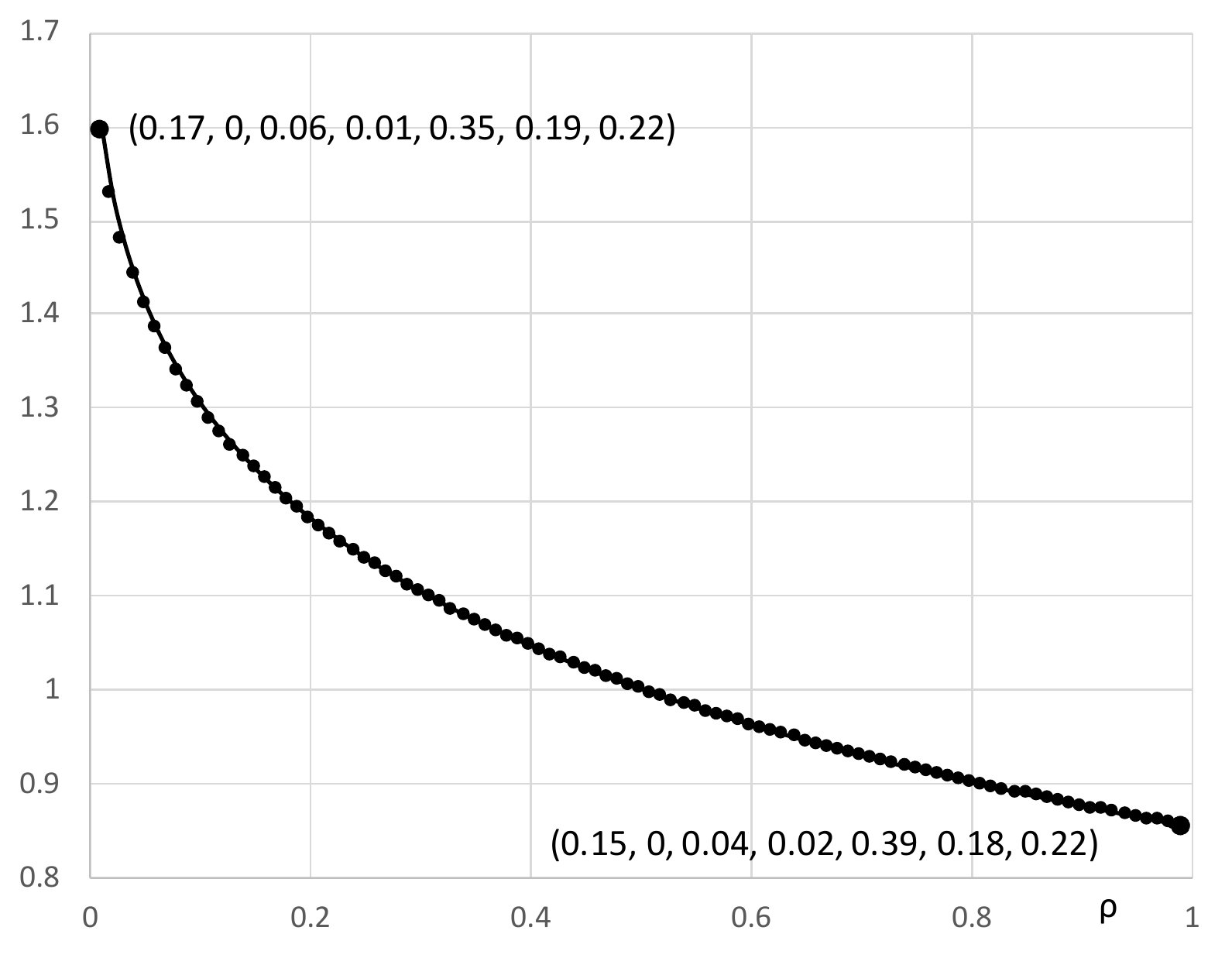}
\caption{Optimal objective values for various $\rho$ and $\Gamma=20$.} \label{fig5}
\end{figure}

\section{Conclusions}

In this paper we have proposed a method of unifying the fuzzy (possibilistic) optimization with the distributionally robust approach.  Using the connections between the possibility and probability theories one can build a set of probability distributions for uncertain constraint coefficient, based on a possibilistic information. The uncertain constraint, the  left-hand side, can be then replaced with the expected value with respect to the worst probability distribution that can occur. In practical applications the true probability distributions for the uncertain data are often unknown. However, in most cases some information about the possible scenarios (parameter realizations) is available. In this case the possibility theory is an attractive choice, because it is flexible and allows to take both available data and experts' opinions into account. In this paper we have proposed some methods of defining possibility distributions for scenarios, which is easy to apply in practice. Furthermore, the resulting deterministic counterpart of the problem can be solved efficiently for a large class of problems (for example for linear programming).

The key element of our model is the definition of a joint possibility distribution in scenario set. This step is quite flexible, as there are only minor restrictions on possibility distribution function which must be imposed. The support of this possibility distribution should contain all possible scenarios and a budget should control the magnitude of deviations of scenarios from some nominal (expected) one. We believe that such possibility distribution can be a good tool to handle the uncertainty in optimization problems.


\begin{thebibliography}{10}

\bibitem{BD06}
C.~Baudrit and D.~Dubois.
\newblock Practical representations of incomplete probabilistic knowledge.
\newblock {\em Computational Statistics and Data Analysis}, 51:86--108, 2006.

\bibitem{BN09}
A.~Ben-Tal, L.~El~Ghaoui, and A.~Nemirovski.
\newblock {\em Robust optimization}.
\newblock Princeton Series in Applied Mathematics. Princeton University Press,
  Princeton, NJ, 2009.

\bibitem{BN99}
A.~Ben-Tal and A.~Nemirovski.
\newblock Robust solutions of uncertain linear programs.
\newblock {\em Operation Research Letters}, 25:1--13, 1999.

\bibitem{BGK17}
D.~Bertsimas, V.~Gupta, and N.~Kallus.
\newblock Data-deriven robust optimization.
\newblock {\em Mathematical Programming}, 167:235--292, 2017.

\bibitem{BS04}
D.~Bertsimas and M.~Sim.
\newblock The price of robustness.
\newblock {\em Operations research}, 52:35--53, 2004.

\bibitem{BS04r}
D.~Bertsimas and M.~Sim.
\newblock Robust discrete optimization under ellipsoidal uncertainty sets.
\newblock Technical report, MIT, 2004.

\bibitem{BV08}
S.~Boyd and L.~Vandenberghe.
\newblock {\em Covex optimization}.
\newblock Cambridge University Press, 2008.

\bibitem{CC59}
A.~Charnes and W.~Cooper.
\newblock Chance-constrainted programming.
\newblock {\em Management Science}, 6(73--79), 1959.

\bibitem{CA99}
G.~De~Cooman and D.~Aeyels.
\newblock Supremum-preserving upper probabilities.
\newblock {\em Information Sciences}, 118:173--212, 1999.

\bibitem{DY10}
E.~Delage and Y.~Ye.
\newblock Distributionally robust optimization under moment uncertainty with
  application to data-deriven problems.
\newblock {\em Operations Research}, 58:595--612, 2010.

\bibitem{DDC09}
S.~Destercke, D.~Dubois, and E.~Chojnacki.
\newblock A consonant approximation of the product of independent consonant
  random sets.
\newblock {\em International Journal of Uncertainty, Fuzziness and
  Knowledge-Based Systems}, 17:773--792, 2009.

\bibitem{DW01}
E.~Diecidue and P.~P. Wakker.
\newblock On the intuition of rank-depedent utility.
\newblock {\em The Journal of Risk and Uncertainty}, 23:281--298, 2001.

\bibitem{DD06}
D.~Dubois.
\newblock Possibility theory and statistical reasoning.
\newblock {\em Computational Statistics and Data Analysis}, 51:47--69, 2006.

\bibitem{DP88}
D.~Dubois and H.~Prade.
\newblock {\em Possibility theory: an approach to computerized processing of
  uncertainty}.
\newblock Plenum Press, New York, 1988.

\bibitem{DP92}
D.~Dubois and H.~Prade.
\newblock When upper probabilities are possibility measures.
\newblock {\em Fuzzy Sets and Systems}, 49:65--74, 1992.

\bibitem{GS10}
J.~Goh and M.~Sim.
\newblock Distributionally robust optimization and its tractable
  approximations.
\newblock {\em Operations Research}, 58:902--917, 2010.

\bibitem{GKZ21a}
R.~Guillaume, A.~Kasperski, and P.~Zieli{\'n}ski.
\newblock Distributionally robust optimization in possibilistic setting.
\newblock In {\em 2021 IEEE International Conference on Fuzzy Systems
  (FUZZ-IEEE)}, pages 1--6, 2021.

\bibitem{CPLEX}
{IBM~ILOG~CPLEX~Optimization~Studio}.
\newblock {CPLEX} {U}ser's manual.
\newblock {https://www.ibm.com}.

\bibitem{IR00}
M.~Inuiguchi and J.~Ramik.
\newblock Possibilistic linear programming: a brief review of fuzzy
  mathematical programming and a comparison with stochastic programming in
  portfolio selection problem.
\newblock {\em Fuzzy Sets and Systems}, 111:3--28, 2000.

\bibitem{IRTV03}
M.~Inuiguchi, J.~Ram{\'\i}k, T.~Tanino, and M.~Vlach.
\newblock Satisficing solutions and duality in interval and fuzzy linear
  programming.
\newblock {\em Fuzzy Sets and Systems}, 135:151--177, 2003.

\bibitem{KM05}
P.~Kall and J.~Mayer.
\newblock {\em Stochastic linear programming. Models, theory and computation}.
\newblock Springer, 2005.

\bibitem{KZ16b}
A.~Kasperski and P.~Zieli{\'n}ski.
\newblock Robust {D}iscrete {O}ptimization {U}nder {D}iscrete and {I}nterval
  {U}ncertainty: {A} {S}urvey.
\newblock In {\em Robustness {A}nalysis in {D}ecision {A}iding, {O}ptimization,
  and {A}nalytics}, pages 113--143. Springer-Verlag, 2016.

\bibitem{KZ21}
A.~Kasperski and P.~Zieli{\'n}ski.
\newblock Soft robust solutions to possibilistic optimization problems.
\newblock {\em Fuzzy Sets and Systems}, 422:130--148, 2021.

\bibitem{KL19}
E.~d. Klerk and M.~Laurent.
\newblock A survey of semidefinite programming approaches to the generalized
  problem of moments and their error analysis.
\newblock In C.~Araujo, G.~Benkart, C.~E. Praeger, and B.~Tanbay, editors, {\em
  World Women in Mathematics 2018. Association for Women in Mathematics
  Series}, pages 17--56. Springer, 2019.

\bibitem{KY97}
P.~Kouvelis and G.~Yu.
\newblock {\em Robust Discrete Optimization and its Applications}.
\newblock Kluwer Academic Publishers, 1997.

\bibitem{LH92}
Y.-J. Lai and C.-L. Hwang.
\newblock {\em Fuzzy Mathematical Programming}.
\newblock Springer-Verlag, 1992.

\bibitem{LIU01}
B.~Liu.
\newblock Fuzzy random chance-constrained programming.
\newblock {\em IEEE Transactions on Fuzzy Systems}, 9:713--720, 2001.

\bibitem{LK10}
W.~A. Lodwick and J.~Kacprzyk, editors.
\newblock {\em Fuzzy {O}ptimization - {R}ecent {A}dvances and {A}pplications},
  volume 254 of {\em Studies in Fuzziness and Soft Computing}.
\newblock Springer-Verlag, 2010.

\bibitem{NO13}
E.~Nasrabadi and J.~B. Orlin.
\newblock Robust optimization with incremental recourse.
\newblock {\em CoRR}, abs/1312.4075, 2013.

\bibitem{PS98}
C.~H. Papadimitriou and K.~Steiglitz.
\newblock {\em Combinatorial optimization: algorithms and complexity}.
\newblock Dover Publications Inc., 1998.

\bibitem{PZT11}
M.~S. Pishvaee, J.~Razmin, and S.~A. Torabi.
\newblock Robust possibilistic programming for socially responsible supply
  chain network design: A new approach.
\newblock {\em Fuzzy Sets and Systems}, 206:1--20, 2012.

\bibitem{QSC12}
E.~Quaeghebeur, K.~Shariatmadar, and G.~De~Cooman.
\newblock Constrained optimization problems under uncertainty with coherent
  lower previsions.
\newblock {\em Fuzzy Sets and Systems}, 206:74--88, 2012.

\bibitem{QU82}
J.~Quiggin.
\newblock A theory of anticipated utility.
\newblock {\em Journal of Economic Behaviour and Organization}, 3:323--343,
  1982.

\bibitem{R06}
J.~Ram{\'\i}k.
\newblock Duality in fuzzy linear programming with possibility and necessity
  relations.
\newblock {\em Fuzzy Sets Systems}, 157:1283--1302, 2006.

\bibitem{RV02}
J.~Ram{\'\i}k and M.~Vlach.
\newblock {\em Generalized concavity in fuzzy optimization and decision
  analysis}.
\newblock Kluwer Academic Publishers, 2002.

\bibitem{SH01}
A.~Shapiro.
\newblock On duality theory of conic linear problems.
\newblock In M.~{\'A}. Goberna and M.~A. L{\'o}pez, editors, {\em Semi-infinite
  programming}, chapter~7, pages 135--165. Kluwer Academic Publishers, 2001.

\bibitem{ST90}
R.~S{\l}owi{\'n}ski and J.~Teghem, editors.
\newblock {\em Stochastic Versus Fuzzy Approaches to Multiobjective
  Mathematical Programming under Uncertainty}.
\newblock Kluwer Academic Publishers, 1990.

\bibitem{TMD13}
M.~Troffaes, E.~Miranda, and S.~Destercke.
\newblock On the connection between probability boxes and possibility measures.
\newblock {\em Information Sciences}, 224:88--108, 2013.

\bibitem{WKS14}
W.~Wiesemann, D.~Kuhn, and M.~Sim.
\newblock Distributionally robust convex optimization.
\newblock {\em Operations Research}, 62:1358--1376, 2014.

\end{thebibliography}

\section*{Appendix}

\begin{proof}[The proof of~(\ref{mod05})]
Let us rewrite~(\ref{mod04}) as follows:

\begin{equation}
\label{mod04a}
\begin{array}{lll}
	\min &  -\pmb{y}^T\pmb{x}-\hat{\pmb{a}}^T\pmb{x}\\
		& y_j\leq -\hat{a}_j+\overline{a}_j(\lambda_i) & j\in [n]\\\
		& -y_j\leq -\underline{a}_j(\lambda_i)+\hat{a}_j & j\in [n]\\
		& \pmb{B}\pmb{y}-\pmb{z}=\pmb{0}&\\
		&\displaystyle  ||\pmb{z}||_{2}\leq \overline{\delta}(\lambda_i)&\\
		&\pmb{y}, \pmb{z}\in \Rset^n&
\end{array}
\end{equation}
and write the Lagrangian function
$$
\begin{array}{lllll}
	L(\pmb{y},\pmb{z},\pmb{\alpha},\pmb{\beta},\pmb{u},\gamma)= &  -\pmb{y}^T\pmb{x}-\hat{\pmb{a}}^T\pmb{x} + \\
	& \displaystyle \sum_{j\in [n]} \alpha_j(y_j+\hat{a}_j- \overline{a}_j(\lambda_i))+ \\
	& \displaystyle \sum_{j\in [n]} \beta_j(-y_j+\underline{a}_j(\lambda_i)-\hat{a}_j)+\\
	&(\pmb{B}\pmb{y}-\pmb{z})^T\pmb{u}+\gamma(||\pmb{z}||_{2}-\overline{\delta}(\lambda_i)),
\end{array}
$$
where $\alpha_j,\beta_j, \gamma\geq 0$ and $\pmb{u}\in\Rset^n$ are the Lagrangian multipliers.
After rearranging the terms we get
$$
	\begin{array}{lll}
	L(\pmb{y},\pmb{z},\pmb{\alpha},\pmb{\beta},\pmb{u},\gamma)= & -\hat{\pmb{a}}^T\pmb{x}+\\
	& \displaystyle \sum_{j\in [n]} (-x_j+\alpha_j-\beta_j+\pmb{B}^T_j\pmb{u})y_j +  \\
	& (\gamma ||\pmb{z}||_2-\pmb{z}^T\pmb{u})- \\
	& \displaystyle \sum_{j\in [n]} \alpha_j(\hat{a}_j-\overline{a}_j(\lambda_i))+ \\
	& \displaystyle \sum_{j\in [n]} \beta_j(\underline{a}_j(\lambda_i)-\hat{a}_j)- \gamma\overline{\delta}(\lambda_i).
\end{array}
$$
Problem~(\ref{mod04a}) is convex and the set of feasible solutions is bounded and has nonempty interior. Hence it has an optimal solution with the finite objective function value $v^*$. The problem also satisfies the strong duality
(see, e.g.,~\cite{BV08}), i.e.
\begin{equation}
\label{f001}
\max_{\pmb{\alpha}\geq \pmb{0},\pmb{\beta}\geq \pmb{0},\pmb{u},\gamma\geq 0} \min_{\pmb{y},\pmb{z}} L(\pmb{y},\pmb{z},\pmb{\alpha},\pmb{\beta},\pmb{u},\gamma)=v^*.
\end{equation}
Since $y_j$, $j\in [n]$, are unrestricted, the equality:
\begin{equation}
\label{f0}
-x_j+\alpha_j-\beta_j+\pmb{B}^T_j\pmb{u}=0, \; j\in [n]
\end{equation}
must hold. We now show that $\min_{\pmb{z}} (\gamma ||\pmb{z}||_2-\pmb{z}^T\pmb{u}) =0$ if $||\pmb{u}||_2\leq \gamma$ and $-\infty$, otherwise. Assume, that $||\pmb{u}||_2\leq \gamma$. By Cauchy-Shwartz inequality, $\pmb{z}^T\pmb{u}\leq ||\pmb{z}||_2 ||\pmb{u}||_2\leq \gamma ||\pmb{z}||_2$. Hence $\gamma ||\pmb{z}||_2-\pmb{z}^T\pmb{u}\geq 0$ and $\min_{\pmb{z}} (\gamma ||\pmb{z}||_2-\pmb{z}^T\pmb{u}) =0$. Assume that $||\pmb{u}||_2> \gamma$ and take $\pmb{z}=s\pmb{u}$ for some $s>0$. Then $\gamma ||\pmb{z}||_2-\pmb{z}^T\pmb{u}=\gamma s ||\pmb{u}||_2-s\pmb{u}^T\pmb{u}=s||\pmb{u}||_2(\gamma-||\pmb{u}||_2)$, which can be made arbitrarily small by increasing $s$. Hence, it must be 
\begin{equation}
\label{f1}
||\pmb{u}||_2\leq \gamma.
\end{equation}
Now, using~(\ref{f0}) and~(\ref{f1}), the problem~(\ref{f00}) can be rewritten as
$$
	\begin{array}{lllll}
		\max & \displaystyle -\hat{\pmb{a}}^T\pmb{x}+\sum_{j=1}^n\alpha_j(\hat{a}_j-\overline{a}_j(\lambda_i))+\sum_{j=1}^n \beta_j(\underline{a}_j(\lambda_i)-\hat{a}_j)- \\
		& \gamma\overline{\delta}(\lambda_i) \\
			& \begin{array}{lll} 
				 -x_j+\alpha_j-\beta_j+\pmb{B}^T_j\pmb{u}=0 &  j\in [n] \\
				||\pmb{u}||_2\leq \gamma& \\
			     \alpha_j, \beta_j, \gamma\geq 0 & j\in [n]\\
			     \pmb{u}\in \Rset^n &
			     \end{array}
	\end{array}
$$
which is equivalent to~(\ref{mod05}).
\end{proof}

\end{document}